\providecommand\@dotsep{5}
\def\listtodoname{List of Todos}
\def\listoftodos{\@starttoc{tdo}\listtodoname}
\numberwithin{equation}{section}
\newtheorem{theorem}{Theorem}[section]
\newtheorem{lemma}[theorem]{Lemma}
\newtheorem{corollary}[theorem]{Corollary}
\newcommand\R{\mathbb R}
\begin{document}

\title[Multiple solutions for two classes of quasilinear problems defined on .... ]
{Multiple solutions for two classes of quasilinear problems defined on a nonreflexive Orlicz-Sobolev space}
\author{Claudianor O. Alves, Sabri Bahrouni and Marcos L. M. Carvalho}
\address[Claudianor O. Alves ]
{\newline\indent Unidade Acad\^emica de Matem\'atica
	\newline\indent
	Universidade Federal de Campina Grande
\newline\indent
e-mail:coalves@mat.ufcg.edu.br
\newline\indent	
58429-970, Campina Grande - PB, Brazil}

\address[Sabri Bahrouni]
{\newline\indent Mathematics Department, Faculty of Sciences
	\newline\indent
	University of Monastir
	\newline\indent
	e-mail:sabribahrouni@gmail.com
	\newline\indent	
	5019 Monastir, Tunisia}

\address[ Marcos Carvalho]
{\newline\indent Instituto de Matem\'atica e Estat\'istica
	\newline\indent
	Universidade Federal de Goias
\newline\indent
e-mail: marcos$\_$leandro$\_$carvalho@ufg.br
\newline\indent 74001-970, Goi\^ania, GO, Brazil }

\pretolerance10000

\begin{abstract}
	In this paper we prove  the existence and multiplicity of solutions for a large class of quasilinear problems on a nonreflexive  Orlicz-Sobolev space. Here, we use the variational methods developed by Szulkin \cite{Szulkin} combined with some properties of the weak$^*$ topology.
\end{abstract}

\thanks{ C. O. Alves was partially
	supported by  CNPq/Brazil 304804/2017-7 }
\subjclass[2010]{35A15, 35J62, 46E30 }
\keywords{Orlicz-Sobolev Spaces, Variational Methods, Quasilinear Elliptic Problems, $\Delta_{2}$-condition, Modular}

\maketitle	

\section{Introduction}
This paper concerns the existence and multiplicity of weak solutions for a class of quasilinear elliptic problem of the type
$$
\left\{
\begin{array}{ll}
\displaystyle-\Delta_{\Phi}{u}=\lambda f(x,u), \quad \mbox{in} \quad \Omega, \\
u =0,\quad\mbox{on} \quad \partial \Omega,
\end{array}
\right.
\leqno{(P)}
$$
where $\Omega \subset \mathbb{R}^N$  is a smooth bounded domain, $N \geq 1$, $\lambda$ is a positive parameter and $f:\Omega\times\mathbb{R}\to\mathbb{R}$ is a Carath\'eodory function verifying some conditions that will be mentioned later on. It is important to recall that
$$
\Delta_\Phi u = \mbox{div}(\phi(|\nabla u|)\nabla u),
$$
where $\Phi:\mathbb{R} \to \mathbb{R}$ is a N-function of the form
$$
\Phi(t)=\int_{0}^{|t|}s\phi(s) \,ds
$$
and $\phi:(0, +\infty) \to (0, +\infty) $ is a $C^{1}$ function verifying some technical assumptions.

We would like to detach that this type of operator appears in a lot of physical applications, such as: Nonlinear Elasticity,  Plasticity, Generalized Newtonian Fluid, Non-Newtonian Fluid and Plasma Physics. For more details about the physical applications we cite \cite{Db}, \cite{FN2} and their references.

Motivated by above applications, many authors have studied problems involving quasilinear problem driven by a N-function $\Phi$, we would like to cite  Bonanno, Bisci and Radulescu \cite{BBR, BBR2}, Cerny \cite{Cerny}, Cl\'ement, Garcia-Huidobro and Man\'asevich \cite{VGMS}, Donaldson \cite{donaldson}, Fuchs and Li \cite{Fuchs1},  Fuchs and Osmolovski \cite{Fuchs2}, Fukagai, Ito and Narukawa \cite{FN}, Gossez \cite{gossez2}, Le and Schmitt \cite{LK}, Mihailescu and Radulescu \cite{MR1, MR2}, Mihailescu and Repovs \cite{MD}, Mihailescu, Radulescu and Repovs \cite{MRR}, Mustonen and Tienari \cite{MT}, Orlicz \cite{O} and their references, where quasilinear problems like $(P)$ have been considered in bounded and unbounded domains of $\mathbb{R}^{N}$.

 In all of these works the so called $\Delta_{2}$-condition has been assumed on $\Phi$ and $\widetilde{\Phi}$, which ensures that the Orlicz-Sobolev space $W^{1,\Phi}(\Omega)$ is a reflexive Banach space. This assertion is used several times in order to get a nontrivial solution for elliptic problems taking into account the weak topology and the classical variational methods to $C^1$ functionals.

In recent years many researchers have studied the nonreflexive case, which is more subtle from a mathematical point of view, because in general the energy functional associated with these problems are in general only continuous and the classical variational methods to $C^1$ cannot be used. For example, in \cite{GKMS}, Garc\'ia-Huidobro, Khoi, Man\'asevich and Schmitt considered the existence of solution for the following nonlinear eigenvalue problem
\begin{equation} \label{P1}
	\left\{
	\begin{array}{ll}
		-\Delta_{\Phi}{u}=\lambda \Psi (u), \quad \mbox{in} \quad \Omega \\
		u=0, \quad \mbox{on} \quad \partial \Omega,
	\end{array}
	\right.
\end{equation}
where $\Omega$ is a bounded domain, $\Phi:\mathbb{R} \to \mathbb{R}$ is a N-function and $\Psi:\mathbb{R} \to \mathbb{R}$ is a continuous function verifying some others technical conditions. In that paper, the authors studied the situation where $\Phi$ does not satisfy the well known $\Delta_2$-condition. More precisely, in the first part of that paper the authors consider the function
\begin{equation} \label{phi*}
	\Phi(t)= (e^{t^{2}}-1)/2, \quad \forall t \in \mathbb{R}.
\end{equation}
More recently, Bocea and  Mih\u{a}ilescu \cite{BM}  made a careful study about the eigenvalues of the problem
\begin{equation} \label{P2}
	\left\{
	\begin{array}{ll}
		-div(e^{|\nabla u|^{2}}\nabla u)-\Delta u=\lambda u, \quad \mbox{in} \quad \Omega \\
		u=0, \quad \mbox{on} \quad \partial \Omega.
	\end{array}
	\right.
\end{equation}
After that, Silva, Gon\c calves and Silva \cite{EGS} considered existence of multiple solutions for a class of problem like (\ref{P1}). In that paper the $\Delta_2$-condition is not also assumed and the main tool used was the truncation of the nonlinearity together with a minimization procedure for the energy functional associated to the quasilinear elliptic problem (\ref{P1}).

In \cite{DMKV}, Silva, Carvalho, Silva and Gon\c calves study a class of problem (\ref{P1}) where the energy functional satisfies the mountain pass geometry and the N-function $\widetilde{\Phi}$ does not satisfies the $\Delta_2$-condition and has a polynomial growth. Still related to the mountain pass geometry, in \cite{AEM}, Alves, Silva and Pimenta also considered the problem (\ref{P1}) for a large class of function $\Psi$, but supposing that $\Phi$ has an exponential growth like (\ref{phi*}).

Motivated by above study involving nonreflexive Banach spaces, we intend to consider two new classes of problem $(P)$ where $W_0^{1,\Phi}(\Omega)$ can be nonreflexive. The plan of the paper is as follows: In Section 2 we done a review about the main properties involving the Orlicz-Sobolev spaces that will be used in our approach. In Section 3 we consider our first class of problem  assuming the following conditions:
$$
t \mapsto t\phi(t); \quad t>0 \;\; \quad \mbox{increasing;}\leqno{(\phi_1)}
$$
$$
\lim_{t \to 0}t\phi(t)=0, \quad  \lim_{t \to +\infty}t\phi(t)=+\infty. \leqno{(\phi_2)}
$$
$$
t\mapsto t^2\phi(t)\qquad\mbox{is convex and}\qquad\frac{t^{2}{\phi(t)}}{\Phi(t)} \geq l >1, \quad \forall t >0. \leqno{(\phi_3)}
$$

	 The Carath\'eodory function $f:\Omega\times\mathbb{R}\to\mathbb{R}$ satisfies:
		\begin{itemize}
			\item[$(f_0)$] There exist a constant $C>0$ and a function $a:[0, +\infty) \to (0, +\infty) $ such that
							$$|f(x,t)|\leq C(a(t)t+1),~\mbox{a.e. }\ x\in\Omega,~t\in[0,\infty),$$
							where
							$$A(t)=\int_0^ta(s)sds$$
							is a N-function satisfying $1<\displaystyle m_A:=\sup_{t>0}\frac{a(t)t^2}{A(t)}<l.$
			\item[$(f_1)$] There exits $\delta>0$ such that $t\mapsto F(x,t):=\int_0^{t}f(x,s)ds$ is decreasing in $[0,\delta)$ a.e. in $\Omega$;
			\item[$(f_2)$] There exits $t_1>0$ such that
			$$F(x,t_1)>0,~\mbox{a.e. in }\Omega.$$
		\end{itemize}

		 A model of nonlinearity satisfying $(f_0)-(f_2)$ is
			$$
			f(x,t)=t_+^{p-1}-t_+^{q-1}, \quad t \in \mathbb{R},
			$$
			where $t_+=\max\{t,0\}$, $1<q<p$, which was considered in \cite{MRep}.
			However, our hypothesis are more general than those in \cite{MRep}, because in our case, $\Phi$ does not satisfies the $\Delta_2$-condition and $t\mapsto\Phi(\sqrt{t})$ does not need be a convex function. Another model of nonlinearity satisfying $(f_0)-(f_2)$, which was not treated in \cite{MRep}, is
			$$
			f_1(x,t)=pt_+^{p-1}\ln(1+t_+)-\frac{t^p}{\ln(1+t_+)}-qt_+^{q-1}, \quad t \in \mathbb{R},
			$$
			where $1<q<p$. Related to the conditions $(\phi_1)-(\phi_3)$, it is possible to show that the functions below  satisfy these conditions: \\
			\noindent $(i)$ \, $ \Phi(t) = (1+|t|^{2})^{\alpha}-1, \alpha \in (1, \frac{N}{N-2})$, \\
			\noindent $(ii)$ $ \Phi(t) = t^{p}\ln(1+|t|), 1< \frac{-1+\sqrt{1+4N}}{2}<p<N-1, N\geq 3$, \\
			\noindent  $(iii)$ $\Phi(t) = \int_{0}^{|t|}s^{1-\alpha}(\sinh^{-1}s)^{\beta}ds, 0\leq \alpha \leq 1$ and $\beta > 0$, \\
			\noindent $(iv)$ $\Phi(t) = \frac{1}{p}|t|^{p}$ for $p>1$,\\
			\noindent $(v)$ $\Phi(t) = \frac{1}{p}|t|^{p} + \frac{1}{q}|t|^{q}$ where $1<p<q<N$ with $q \in (p, p^{*}),$ \\
			\noindent and \\
			\noindent  $(vi)$ $\Phi(t)=(e^{|t|^{2}}-1)/2$. \\

		From now on, we say that $u \in W_0^{1,\Phi}(\Omega)$ is a weak solution of $(P)$ whenever
$$
\int_{\Omega}\phi(|\nabla u|)\nabla u \nabla v\,dx=\lambda\int_{\Omega}f(x,u)v\,dx, \quad \forall v \in W_0^{1,\Phi}(\Omega).
$$

Under these assumptions the main result in this section can be stated as follows:

\begin{theorem} \label{T1} Assume $(f_0)-(f_2)$ and $(\phi_1) - (\phi_3)$. Then there exists $\lambda^*>0$ such that problem $(P)$ has at least two nontrivial weak solutions for all $\lambda>\lambda^*$.
\end{theorem}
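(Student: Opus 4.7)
The plan is to study the energy functional
$J(u)=\int_\Omega \Phi(|\nabla u|)\,dx-\lambda\int_\Omega F(x,u)\,dx$
on $W_0^{1,\Phi}(\Omega)$ and to decompose it as $J=\Psi+I$, where $\Psi(u)=\int_\Omega \Phi(|\nabla u|)\,dx$ is convex and lower semicontinuous (but not G\^ateaux differentiable in all directions, since $\widetilde\Phi$ may fail the $\Delta_2$-condition) and $I(u)=-\lambda\int_\Omega F(x,u)\,dx$ is of class $C^1$. The $C^1$ character of $I$ follows from $(f_0)$ together with the gap $m_A<l$, which forces both $A$ and $\widetilde A$ to satisfy $\Delta_2$ and yields a compact embedding $W_0^{1,\Phi}(\Omega)\hookrightarrow L^A(\Omega)$. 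This places the problem inside Szulkin's framework \cite{Szulkin}, in which the generalized critical points of $J$ are precisely the weak solutions of $(P)$. To compensate for the nonreflexivity of $W_0^{1,\Phi}(\Omega)$ I would work with the weak$^*$ topology inherited from an isometric embedding of $W_0^{1,\Phi}(\Omega)$ into a product of spaces of the form $L^\Phi(\Omega)=(E^{\widetilde\Phi}(\Omega))^*$, under which bounded sets are sequentially weak$^*$ relatively compact.

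The first nontrivial solution $u_1$ would be produced by direct minimization. Coercivity of $J$ is a consequence of the lower index estimate in $(\phi_3)$ combined with the strict gap $m_A<l$, which keeps the $F$-term subcritical with respect to the modular at infinity. Lower semicontinuity of $\Psi$ with respect to the weak$^*$ topology follows from its convexity, and $I$ is weak$^*$ sequentially continuous thanks to the compact embedding into $L^A(\Omega)$, so the direct method supplies a minimizer. Choosing a test function $w_0\in W_0^{1,\Phi}(\Omega)$ equal to $t_1$ outside a thin tubular neighborhood of $\partial\Omega$, condition $(f_2)$ and a sufficiently large threshold $\lambda^*$ give $J(w_0)<0=J(0)$, hence $J(u_1)<0$ and $u_1\neq 0$.

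For the second solution I would invoke Szulkin's mountain pass theorem. The point $u\equiv 0$ must be shown to be a strict local minimum of $J$ in the $W_0^{1,\Phi}$-norm: condition $(f_1)$ forces $F(x,\cdot)\le 0$ on $[0,\delta)$, so, after the standard reduction to the positive-part nonlinearity $f(x,t_+)$ that yields nonnegative solutions, we have $-\lambda F(x,u)\ge 0$ whenever $u$ lies in a suitable small modular ball; combining this with the subcritical bound from $(f_0)$ and the embedding into $L^A(\Omega)$ produces constants $\rho,\alpha>0$ with $J(u)\ge\alpha$ for $\|u\|=\rho$, while $\|u_1\|>\rho$ and $J(u_1)<0$ furnish the other side of the mountain. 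The candidate critical level $c\ge\alpha>0$ is then defined in the usual min-max fashion over continuous paths joining $0$ to $u_1$.

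The main obstacle is verifying the Palais-Smale condition in Szulkin's sense inside a nonreflexive space. A $(PS)_c$-sequence $(u_n)$ is bounded by coercivity and therefore admits a weak$^*$-convergent subsequence with limit $u_2\in W_0^{1,\Phi}(\Omega)$; the convexity of $\Psi$ provides $\Psi(u_2)\le\liminf\Psi(u_n)$, while the strong convergence of $I'(u_n)$ coming from the compact embedding $W_0^{1,\Phi}(\Omega)\hookrightarrow L^A(\Omega)$ allows passage to the limit in the variational inequality that characterizes Szulkin critical points. The delicate point is to combine these two ingredients so as to recover the Szulkin subdifferential inequality for $u_2$ without any differentiability of $\Psi$ at the limit, using only the modular structure and the weak$^*$ topology. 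Once this is achieved, Szulkin's theorem identifies $u_2$ as a critical point of $J$ at level $c>0>J(u_1)$, so $u_2\notin\{0,u_1\}$ and the two required nontrivial weak solutions are exhibited.
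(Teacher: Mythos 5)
Your overall strategy (Szulkin's framework for $I=Q-\lambda\mathcal F$ with $Q$ convex l.s.c.\ and $\mathcal F\in C^1$, weak$^*$ compactness via $L^\Phi(\Omega)=(E^{\widetilde\Phi}(\Omega))^*$, minimization for the first solution, mountain pass for the second) coincides with the paper's, but two steps have genuine gaps. The first is the mountain pass geometry. You claim that $(f_1)$ yields $-\lambda F(x,u)\ge 0$ ``whenever $u$ lies in a suitable small modular ball.'' This is false: smallness of $\|u\|$ gives no pointwise bound $u(x)<\delta$, since $W_0^{1,\Phi}(\Omega)$ does not embed into $L^\infty(\Omega)$, so the set $[u\ge\delta]$ may have positive measure for arbitrarily small $\|u\|$ and there $F(x,u)$ can be positive. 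The paper's Lemma \ref{MPG} instead splits $\Omega$ into the region where $G\le0$ and the region $\Omega_{u,\delta}$ where $u\ge\delta$, and on the latter uses $(f_0)$ and the embedding into $L^s(\Omega)$ with $s\in(m_A,l)$ to obtain $\lambda\int_{\Omega_{u,\delta}}G(x,u)\,dx\le\lambda\overline C_\delta\|u\|^s$, hence $J(u)\ge\|u\|^l\bigl(1-\lambda\overline C_\delta\|u\|^{s-l}\bigr)$. Since $s<l$, the sphere on which $J\ge\rho$ has \emph{large} radius depending on $\lambda$; the geometry here is the sublinear one dictated by $m_A<l$, not the ``strict local minimum at the origin'' picture you describe. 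Note also that the paper first truncates $f$ to $g$ (zero for $t<0$, frozen at $f(x,u_1)$ for $t>u_1$) and proves the comparison $u\le u_1$ for the auxiliary problem: the truncation below $0$ is unavoidable because $(f_0)$--$(f_2)$ control $f$ only on $[0,\infty)$, so your ``standard reduction to $f(x,t_+)$'' must be carried out explicitly and the solution transferred back to $(P)$.

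The second, more serious, gap is the one you yourself defer (``Once this is achieved\dots''): passing from a Szulkin critical point to a weak solution of $(P)$. A critical point only satisfies $Q(v)-Q(u)\ge\lambda\int_\Omega f(x,u)(v-u)\,dx$; since $Q$ is not differentiable and may equal $+\infty$, one must first prove $u\in D_\Phi\cap\mathrm{dom}(\phi(t)t)$, i.e.\ $\phi(|\nabla u|)|\nabla u|\in L^{\widetilde\Phi}(\Omega)$, before the Euler--Lagrange identity can be derived. The paper does exactly this: the difference-quotient choice $v=(1-1/n)u$ together with the identity $\phi(t)t^2=\Phi(t)+\widetilde\Phi(\phi(t)t)$ (Lemma \ref{Domin}), the convexity of $t^2\phi(t)$ from $(\phi_3)$ and the weak$^*$ lower semicontinuity result of Garc\'ia-Huidobro et al.\ for the PS sequence, and then Lemma \ref{Lema0} to get the weak formulation. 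None of this appears in your plan. Relatedly, verifying the Palais--Smale condition in the strong (norm) sense, as you propose, is not feasible here; the paper bypasses it by working directly with the bounded PS sequence and proving $Q(u_n)\to Q(u_2)$ by testing \eqref{sequencia2} at $v=u_2$, which is also what identifies the level $J(u_2)=c>0>I(u_1)$ and hence $u_2\ne u_1$. Without these ingredients your argument does not yet produce a second weak solution.
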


In Section 4, we study a second class of problem, where we require the following structural assumptions on $\Phi$ and $f$:
\begin{itemize}
    \item[$(\phi_4)$ ] $\displaystyle 0\leq\ell -1 =\inf_{t>0}\frac {(t\phi(t))^\prime}{\phi(t)}\leq \frac {(t\phi(t))^\prime}{\phi(t)}\leq m-1,~t>0$
	\item[$(f_3)$] There exist $C>0$ and $0<\alpha<1$ such that
$$
f(x,0)\in L^\infty(\Omega)\quad\text{and}\quad |F(x,t)|\leq C {\Phi(t)^\alpha}, ~t\in\mathbb{R} \setminus \{0\}.
$$
\end{itemize}
The condition $(\phi_4)$ does not guarantee that $W_0^{1,\Phi}(\Omega)$ is reflexive, because it permits to work with the case $\ell=1$, where we have a loss of reflexivity, because in this situation $\widetilde{\Phi}$ does not satisfies the $\Delta_2$-condition. For $\ell=1$, we have as model $\Phi(t)=|t|\log(1+|t|)$. In this section our main result is the following
\begin{theorem}\label{T2}
	Assume $(\phi_1),(\phi_2),(\phi_4)$, $(f_{1})- (f_{3})$. Then there exist $\lambda^*>0$ such that problem $(P)$ has at least two solutions $u_1,u_2\in  W_0^{1,\Phi}(\Omega)\cap L^{\infty}(\Omega) \setminus\{0\}$
	for all $\lambda>\lambda^*$.
\end{theorem}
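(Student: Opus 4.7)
I will produce $u_1$ as a global minimizer and $u_2$ as a mountain-pass critical point of a truncated version of the energy functional
$$I_\lambda(u)=\int_\Omega\Phi(|\nabla u|)\,dx-\lambda\int_\Omega F(x,u)\,dx,$$
working in Szulkin's variational framework, but adapted to the weak-$*$ topology of the nonreflexive Orlicz--Sobolev space $W_0^{1,\Phi}(\Omega)$. The hypothesis $(\phi_4)$ forces $m<\infty$, hence $\Phi\in\Delta_2$ and $I_\lambda$ is continuous on $W_0^{1,\Phi}(\Omega)$; the loss of reflexivity when $\ell=1$ comes only through $\widetilde{\Phi}$, and is handled via the identification $L^\Phi(\Omega)=(E^{\widetilde{\Phi}}(\Omega))^*$ established in Section 2, which provides weak-$*$ compactness on bounded sets.

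\textbf{First solution.} From $(f_3)$ and Young's inequality $\Phi(t)^\alpha\le\varepsilon\Phi(t)+C_\varepsilon$ (valid since $\alpha<1$), together with the Sobolev--Orlicz embedding $W_0^{1,\Phi}\hookrightarrow L^\Phi$ and the bound $t^2\phi(t)/\Phi(t)\ge\ell\ge1$ from $(\phi_4)$, I obtain
$$I_\lambda(u)\ge(1-\lambda\varepsilon C)\int_\Omega\Phi(|\nabla u|)\,dx-C_\lambda,$$
so $I_\lambda$ is bounded below and coercive. A minimizing sequence $(u_n)$ is bounded in $W_0^{1,\Phi}(\Omega)$, so Banach--Alaoglu supplies a subsequence with $u_n\stackrel{*}{\rightharpoonup}u_1$; weak-$*$ lower semicontinuity of the convex modular $u\mapsto\int\Phi(|\nabla u|)\,dx$ together with Vitali's theorem applied to $\int F(x,u_n)\,dx$ (the subcriticality $\alpha<1$ in $(f_3)$ gives the equi-integrability) yields $I_\lambda(u_1)\le\liminf I_\lambda(u_n)$, so $u_1$ is a minimizer. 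Hypothesis $(f_2)$ lets me construct $\psi\in C_c^\infty(\Omega)$ with $\psi\equiv t_1$ on $\omega\subset\subset\Omega$ of large measure, and an explicit modular computation produces $\lambda^*>0$ such that $I_\lambda(\psi)<0$ for $\lambda>\lambda^*$, hence $u_1\ne0$. A Moser-type iteration based on the weak formulation, $(f_3)$, and the Sobolev--Orlicz inequality yields $u_1\in L^\infty(\Omega)$.

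\textbf{Second solution.} I truncate $f$ by setting $\bar f(x,t):=f(x,t\wedge M)\chi_{t>0}$ with $M>\|u_1\|_\infty$, and apply Szulkin's mountain-pass theorem to the associated functional $\bar I_\lambda$. The geometry around $0$ is built from $(f_1)$: since $F(x,0)=0$ and $F(x,\cdot)$ is decreasing on $[0,\delta)$, $\bar F(x,t)\le0$ on $[0,\delta)$ and $\bar F(x,t)=0$ on $(-\infty,0]$; combining this with the sub-$\Phi$ bound in $(f_3)$ and the Chebyshev estimate $|\{u\ge\delta\}|\le\Phi(\delta)^{-1}\int\Phi(u)\,dx$ localizes the positive contribution of $\bar F$ to a small set, so that $\bar I_\lambda(u)\ge\beta>0$ on a sphere of small radius $r_0$ in $W_0^{1,\Phi}(\Omega)$ around $0$. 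Together with $\bar I_\lambda(u_1)<0$, this yields the classical mountain-pass geometry, and the min-max level
$$c=\inf_{\gamma\in\Gamma}\max_{t\in[0,1]}\bar I_\lambda(\gamma(t))\ge\beta>0$$
is attained at a critical point $u_2$ necessarily distinct from $0$ and $u_1$. Testing with $(u_2)^-$ and $(u_2-M)^+$ (maximum-principle arguments using $(\phi_1)$--$(\phi_2)$) gives $0\le u_2\le M$, so $u_2$ solves the untruncated problem $(P)$ and lies in $W_0^{1,\Phi}(\Omega)\cap L^\infty(\Omega)\setminus\{0\}$.

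\textbf{Main obstacle.} The principal difficulty is verifying the Palais--Smale condition in Szulkin's sense for $\bar I_\lambda$ in the nonreflexive setting: a bounded PS-sequence $(v_n)$ in $W_0^{1,\Phi}(\Omega)$ only admits a weak-$*$ limit $v_0$ via $L^\Phi=(E^{\widetilde{\Phi}})^*$, so the classical $S_+$ argument based on
$$\int_\Omega\bigl(\phi(|\nabla v_n|)\nabla v_n-\phi(|\nabla v_0|)\nabla v_0\bigr)\cdot(\nabla v_n-\nabla v_0)\,dx\to0$$
must be re-derived in the weak-$*$ framework, exploiting $(\phi_4)$ and the subcriticality in $(f_3)$ to extract almost-everywhere convergence of the gradients, and thereby upgrading weak-$*$ to strong convergence. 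A secondary hurdle is the delicate balance in the local positivity estimate $\bar I_\lambda\ge\beta>0$ on a small sphere: the positive contribution of $\bar F$ on $\{u\ge\delta\}$ must be absorbed by the modular via the Chebyshev/Sobolev chain, and one has to check that $\beta$ stays bounded away from $0$ uniformly for $\lambda$ in the admissible range $(\lambda^*,\infty)$.
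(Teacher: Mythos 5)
Your overall skeleton (global minimizer $u_1$, truncation, mountain pass for $u_2$, distinctness via $I(u_1)<0<c$) matches the paper's, but there are two genuine gaps. First, you truncate at a \emph{constant} level $M>\|u_1\|_\infty$, setting $\bar f(x,t)=f(x,t\wedge M)\chi_{t>0}$, and then claim that testing with $(u_2-M)^+$ gives $u_2\le M$. That step fails: on $\{u_2>M\}$ the equation reads $-\Delta_\Phi u_2=\lambda f(x,M)$, and nothing in $(f_1)$--$(f_3)$ forces $f(x,M)\le 0$, so the constant $M$ is not a supersolution and the maximum principle gives nothing. Without $u_2\le M$ you cannot return to the untruncated problem. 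The paper truncates at the \emph{function} $u_1(x)$ (i.e.\ $g(x,t)=f(x,u_1)$ for $t>u_1(x)$); then on $\{u_2>u_1\}$ both $u_2$ and $u_1$ solve the same equation with the same right-hand side $\lambda f(x,u_1)$, and testing the difference with $(u_2-u_1)^+$ plus strict monotonicity of $\phi(t)t$ yields $u_2\le u_1$ (Lemma \ref{Auxiliar}, following \cite{MRep}). This choice of truncation is not cosmetic; it is what makes the comparison work.

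Second, you correctly identify the convergence of the Palais--Smale sequence as the main obstacle, but the $S_+$/a.e.-gradient-convergence route you sketch is neither carried out nor needed, and it is doubtful it can be pushed through in the weak-$*$ setting. The paper never verifies the (PS) condition: it uses a mountain pass theorem that only produces a (PS) sequence $(u_n)$ satisfying the Szulkin inequality \eqref{sequencia2}, extracts $u_n\stackrel{*}{\rightharpoonup}u_2$, handles the nonlinear terms by the compact embedding into $L_A(\Omega)$ (plus dominated convergence using $u_1\in L^\infty$), and then recovers $Q(u_n)\to Q(u_2)$ by a two-sided squeeze: weak-$*$ lower semicontinuity of the convex modular $Q$ gives $Q(u_2)\le\liminf Q(u_n)$, while inserting $v=u_2$ into \eqref{sequencia2} gives the reverse inequality. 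This preserves the level $c$ and lets one pass to the limit in \eqref{sequencia2} to conclude $u_2$ is a critical point, with no strong convergence and no $S_+$ property. That squeeze is the missing idea in your write-up; as it stands, your "main obstacle" paragraph describes the hard point without resolving it. (Minor additional points: the paper also needs $u_1\in\mathrm{dom}(\phi(t)t)$ and Lemma \ref{Lema0} to upgrade the Szulkin critical point to a weak solution, which under $(\phi_4)$ is automatic since $\Phi\in\Delta_2$; and the $L^\infty$ bound is taken from \cite{SCA} rather than a Moser iteration, which is fine.)
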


Before concluding this introduction, we would like to point out that Theorems \ref{T1} and \ref{T2} complement the study made in \cite{MRep}, in the sense that we are considering new classes of N-functions that were not considered in that reference. Moreover, the above theorems are the first results in the literature involving multiplicity of solutions for a class of quasilinear problems driven by a N-function $\Phi$ whose the Orlicz-Sobolev space $W_0^{1,\Phi}(\Omega)$ can be nonreflexive.

\section{Basics on Orlicz-Sobolev spaces}

In this section we recall some properties of Orlicz and Orlicz-Sobolev spaces, which can be found in \cite{Adams, RR}. First of all, we recall that a continuous function $\Phi : \mathbb{R} \rightarrow [0,+\infty)$ is a
N-function if:
\begin{itemize}
	\item[$(i)$] $\Phi$ is convex.
	\item[$(ii)$] $\Phi(t) = 0 \Leftrightarrow t = 0 $.
	\item[$(iii)$] $\displaystyle\lim_{t\rightarrow0}\frac{\Phi(t)}{t}=0$ and $\displaystyle\lim_{t\rightarrow+\infty}\frac{\Phi(t)}{t}= +\infty$ .
	\item[$(iv)$] $\Phi$ is even.
\end{itemize}
We say that a N-function $\Phi$ verifies the $\Delta_{2}$-condition, if
\[
\Phi(2t) \leq K\Phi(t),\quad \forall t\geq 0,
\]
for some constant $K > 0$. For instance, it can be shown that functions  $\Phi$ given in $(i)-(v)$ satisfy the $\Delta_2$-condition, while $\Phi(t)=(e^{t^{2}}-1)/2$ does not verify it.

In what follows, fixed an open set $\Omega \subset \mathbb{R}^{N}$ and a N-function $\Phi$, we define the Orlicz space associated with $\Phi$ as
\[
L^{\Phi}(\Omega) = \left\{  u \in L_{loc}^{1}(\Omega) \colon \ \int_{\Omega} \Phi\left(\frac{|u|}{\lambda}\right)dx < + \infty \ \ \mbox{for some}\ \ \lambda >0 \right\}.
\]
The space $L^{\Phi}(\Omega)$ is a Banach space endowed with the Luxemburg norm given by
\[
\Vert u \Vert_{\Phi} = \inf\left\{  \lambda > 0 : \int_{\Omega}\Phi\Big(\frac{|u|}{\lambda}\Big)dx \leq1\right\}.
\]
The complementary function $\widetilde{\Phi}$ associated with $\Phi$ is given
by its Legendre's transformation, that is,
\[
\widetilde{\Phi}(s) = \max_{t\geq 0}\{ st - \Phi(t)\}, \quad  \mbox{for} \quad s\geq0.
\]
The functions $\Phi$ and $\widetilde{\Phi}$ are complementary each other. Moreover, we also have a Young type inequality given by
\[
st \leq \Phi(t) + \widetilde{\Phi}(s), \quad \forall s, t\geq0.
\]
Using the above inequality, it is possible to prove a H\"older type inequality, that is,
\[
\Big| \int_{\Omega}uvdx \Big| \leq 2 \Vert u \Vert_{\Phi}\Vert v \Vert_{ \widetilde{\Phi}},\quad \forall u \in L^{\Phi}(\Omega) \quad \mbox{and} \quad \forall v \in L^{\widetilde{\Phi}}(\Omega).
\]
The corresponding Orlicz-Sobolev space is defined by
\[
W^{1, \Phi}(\Omega) = \Big\{ u \in L^{\Phi}(\Omega) \ :\ \frac{\partial u}{\partial x_{i}} \in L^{\Phi}(\Omega), \quad i = 1, ..., N\Big\},
\]
endowed with the norm
\[
\Vert u \Vert_{1,\Phi} = \Vert \nabla u \Vert_{\Phi} + \Vert u \Vert_{\Phi}.
\]

The space $W_0^{1,\Phi}(\Omega)$ is defined as the weak$^*$ closure of $C_0^{\infty}(\Omega)$ in $W^{1,\Phi}(\Omega)$. Moreover, by the Modular Poincar\'e's inequality
$$
\int_{\Omega}\Phi(|u|)\,dx\leq \int_{\Omega}\Phi(d|\nabla u|)\,dx, \quad \forall u \in W_0^{1,\Phi}(\Omega),
$$
where $d=diam(\Omega)$, and it follows that
$$
\|u\|_\Phi\leq 2d\|\nabla u\|_\Phi,\qquad u\in W_0^{1,\Phi}(\Omega).
$$
The last inequality yields that  the functional $\|\cdot\|:=\|\nabla \cdot\|_\Phi$ defines an equivalent norm in $W_0^{1,\Phi}(\Omega)$. Here we refer the readers to the important works \cite{gossez,gossez2}. The spaces $L^{\Phi}(\Omega)$, $W^{1, \Phi}(\Omega)$ and $W_0^{1, \Phi}(\Omega)$ are separable and reflexive, when $\Phi$ and $\widetilde{\Phi}$ satisfy  $\Delta_{2}$-condition.

If $E^{\Phi}(\Omega)$ denotes the closure of $L^{\infty}(\Omega)$ in $L^{\Phi}(\Omega)$ with respect to the norm $\|\,\,\|_{\Phi}$, then  $L^{\Phi}(\Omega)$ is the dual space of $E^{\widetilde{\Phi}}(\Omega)$, while $L^{\widetilde{\Phi}}(\Omega)$ is the dual space of $E^{\Phi}(\Omega)$. Moreover, $E^{\Phi}(\Omega)$ and $E^{\widetilde{\Phi}}(\Omega)$ are separable spaces and any continuous linear functional $M:E^{\Phi}(\Omega) \to \mathbb{R}$ is of the form
$$
M(v)=\int_{\Omega}v(x)g(x)\,dx \quad \mbox{for some} \quad g \in L^{\widetilde{\Phi}}(\Omega).
$$
We recall that if $\Phi$ verifies the $\Delta_2$-condition, we then have $E^{\Phi}(\Omega)=L^{\Phi}(\Omega)$.

The next result is crucial in the approach explored in Section 3, and its proof follows directly from a result by Donaldson \cite[Proposition 1.1]{donaldson}.

\begin{lemma}  \label{Estrela} Assume that $\Phi$ is a N-function and $\widetilde{\Phi}$ verifies the $\Delta_2$-condition. If $(u_n) \subset W^{1,\Phi}_0(\Omega) $ is a bounded sequence, then there are a subsequence of $(u_n)$, still denoted by itself, and $u \in  W^{1,\Phi}_0(\Omega)$ such that
	$$
	u_n \stackrel{*}{\rightharpoonup} u \quad \mbox{in} \quad W^{1,\Phi}_0(\Omega)
	$$
	and
	$$
	\int_{\Omega}u_n v \,dx  \to  \int_{\Omega}u v \,dx, \quad  \int_{\Omega}\frac{\partial u_n}{\partial x_i} w \,dx  \to  \int_{\Omega}\frac{\partial u}{\partial x_i} w \,dx, \quad \forall v,w \in  E^{\tilde{\Phi}}(\Omega)=L^{\tilde{\Phi}}(\Omega).
	$$
\end{lemma}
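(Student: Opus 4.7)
The plan is to reduce the statement to a single application of the sequential Banach--Alaoglu theorem in the dual framework recalled just before the lemma. The key input is that $L^{\Phi}(\Omega) = (E^{\widetilde{\Phi}}(\Omega))^{*}$, and that under the $\Delta_{2}$-condition on $\widetilde{\Phi}$ one has $E^{\widetilde{\Phi}}(\Omega) = L^{\widetilde{\Phi}}(\Omega)$, a separable Banach space. Thus $L^{\Phi}(\Omega)$ is the dual of a separable space, so its closed bounded sets are weak-$*$ sequentially compact (and metrizable for the weak-$*$ topology).

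The first step is to use the boundedness of $(u_{n})$ in $W_{0}^{1,\Phi}(\Omega)$ to obtain boundedness of the $N+1$ sequences $(u_{n})$ and $(\partial u_{n}/\partial x_{i})$ for $i=1,\dots,N$ in $L^{\Phi}(\Omega)$, and then to perform a diagonal extraction to find a single subsequence (still denoted $(u_{n})$) and elements $u, v_{1},\dots,v_{N}\in L^{\Phi}(\Omega)$ with
$$
u_{n}\stackrel{*}{\rightharpoonup} u,\qquad \frac{\partial u_{n}}{\partial x_{i}} \stackrel{*}{\rightharpoonup} v_{i} \quad\text{in } L^{\Phi}(\Omega).
$$
By the identification of the dual, this is exactly the integral-convergence statement displayed in the lemma for test functions in $E^{\widetilde{\Phi}}(\Omega)=L^{\widetilde{\Phi}}(\Omega)$. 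The second step is to identify $v_{i}$ with $\partial u/\partial x_{i}$. For each $\varphi\in C_{0}^{\infty}(\Omega)\subset L^{\widetilde{\Phi}}(\Omega)$, integration by parts gives
$$
\int_{\Omega}\frac{\partial u_{n}}{\partial x_{i}}\varphi\,dx = -\int_{\Omega} u_{n}\frac{\partial \varphi}{\partial x_{i}}\,dx,
$$
and since $\varphi,\partial\varphi/\partial x_{i}\in E^{\widetilde{\Phi}}(\Omega)$, letting $n\to\infty$ through the weak-$*$ convergence just extracted yields $\int_{\Omega} v_{i}\varphi\,dx = -\int_{\Omega} u\,\partial\varphi/\partial x_{i}\,dx$, so $v_{i}=\partial u/\partial x_{i}$ in $\mathcal{D}'(\Omega)$. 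In particular $u\in W^{1,\Phi}(\Omega)$.

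The main subtlety, and the step where I would rely directly on Donaldson's Proposition~1.1, is proving that the limit $u$ belongs to $W_{0}^{1,\Phi}(\Omega)$ and not merely to the ambient $W^{1,\Phi}(\Omega)$. Since $W_{0}^{1,\Phi}(\Omega)$ is defined as the weak-$*$ closure of $C_{0}^{\infty}(\Omega)$, what is required is that this closure be \emph{sequentially} closed for the product weak-$*$ topology on $(u,\nabla u)\in L^{\Phi}(\Omega)^{N+1}$. This is not automatic in the nonreflexive setting because the weak-$*$ topology is not, in general, sequential; however, metrizability of the weak-$*$ topology on bounded sets (coming from separability of the predual $L^{\widetilde{\Phi}}(\Omega)$) makes weak-$*$ sequential closure of a convex set coincide with weak-$*$ closure on bounded subsets, which suffices since $(u_{n})$ is bounded. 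This is precisely the content of Donaldson's result, so the whole lemma follows by quoting it together with the identification $E^{\widetilde{\Phi}}(\Omega)=L^{\widetilde{\Phi}}(\Omega)$ under $\Delta_{2}$ on $\widetilde{\Phi}$.
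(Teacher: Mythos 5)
Your argument is correct and is essentially the route the paper takes: the paper's proof is a bare citation of Donaldson's Proposition~1.1, which packages exactly the ingredients you spell out, namely the duality $L^{\Phi}(\Omega)=(E^{\widetilde{\Phi}}(\Omega))^{*}$ with $E^{\widetilde{\Phi}}(\Omega)=L^{\widetilde{\Phi}}(\Omega)$ separable under the $\Delta_{2}$-condition on $\widetilde{\Phi}$, sequential weak-$*$ compactness of bounded sets, and the identification of the limiting gradients by testing against $C_{0}^{\infty}(\Omega)$. One small remark: your final step is easier than you make it, since $W_{0}^{1,\Phi}(\Omega)$ is by definition weak-$*$ closed and a closed set is sequentially closed in any topology, so the limit of a weak-$*$ convergent sequence from $W_{0}^{1,\Phi}(\Omega)$ lies in $W_{0}^{1,\Phi}(\Omega)$ without invoking metrizability of the weak-$*$ topology on bounded sets.
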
	

The lemma just above is crucial when the space $W^{1,\Phi}_0(\Omega)$ is not reflexive, for example if $\Phi(t)=(e^{t^2}-1)/2$. However, if $\Phi$ is one of the functions given in $(i)-(v)$, the above lemma is not necessary since $\Phi$ and $\widetilde{\Phi}$ satisfy the $\Delta_2$-condition, and so, $W^{1,\Phi}_0(\Omega)$ is reflexive. Here we would like to point out that $(\phi_3)$ ensures that $\widetilde{\Phi}$ verifies the $\Delta_2$-condition, for more details see Fukagai and Narukawa \cite{FN}.

\begin{lemma}\label{ed1}
	Suppose $(\phi_{1}),~(\phi_{2})$ and either $(\phi_{3})$ or $(\phi_{4})$. Let $(u_{n}) \subset W_0^{1,\Phi}(\Omega)$ be a fixed sequence such that $\|u_{n}\| \rightarrow \infty$. Then there exists $n_0 \in \mathbb{N}$ such that
	$$
	\int_{\Omega} \Phi(|\nabla u_n|) dx \geq \|u_n\|^l, \quad \forall n \geq n_0.
	$$
	Hence
	$$
	\|u_n\| \to +\infty \,\, \mbox{implies that} \,\, \int_{\Omega}  \Phi(|\nabla u_n|)\, dx \to +\infty.
	$$
\end{lemma}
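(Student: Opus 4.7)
The key observation is that it suffices to prove the following Orlicz-space fact: if either $(\phi_3)$ or $(\phi_4)$ holds and $v\in L^{\Phi}(\Omega)$ satisfies $\|v\|_{\Phi}>1$, then
$$
\int_{\Omega}\Phi(|v|)\,dx\geq \|v\|_{\Phi}^{l}.
$$
Applying this with $v=|\nabla u_{n}|$ and using the equivalent norm $\|u\|=\|\nabla u\|_{\Phi}$ on $W^{1,\Phi}_{0}(\Omega)$, the statement of the lemma follows for every $n$ large enough that $\|u_{n}\|>1$, and divergence of the modular is then immediate from $\|u_{n}\|\to\infty$.

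The first step in proving this fact is to extract the uniform lower index
$$
\frac{t\Phi'(t)}{\Phi(t)}\geq l,\qquad t>0,
$$
from either hypothesis, using $\Phi'(t)=t\phi(t)$. Under $(\phi_3)$ this is literally $t^{2}\phi(t)/\Phi(t)\geq l$. Under $(\phi_4)$, writing $g(t)=t\phi(t)$, the bound $(t\phi(t))'/\phi(t)\geq \ell-1$ rewrites as $tg'(t)/g(t)\geq \ell-1$, so $t\mapsto g(t)/t^{\ell-1}$ is non-decreasing on $(0,\infty)$. Hence $g(s)\leq (s/t)^{\ell-1}g(t)$ for $0<s\leq t$, and integrating in $s$ over $[0,t]$ yields $\Phi(t)\leq tg(t)/\ell$, i.e.\ $t\Phi'(t)/\Phi(t)\geq \ell$; in this case $l$ is to be read as $\ell$.

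Next I upgrade the pointwise index to the scaling inequality
$$
\Phi(\sigma t)\geq \sigma^{l}\Phi(t),\qquad t>0,\ \sigma\geq 1,
$$
by computing $\frac{d}{d\sigma}\log\Phi(\sigma t)=\frac{\sigma t\,\Phi'(\sigma t)}{\sigma\,\Phi(\sigma t)}\geq l/\sigma$ and integrating from $1$ to $\sigma$. Then, for $n$ with $\lambda_{n}:=\|u_{n}\|>1$ and any $\lambda\in(1,\lambda_{n})$, the definition of the Luxemburg norm as an infimum forces $\int_{\Omega}\Phi(|\nabla u_{n}|/\lambda)\,dx>1$. Applying the scaling inequality pointwise with $\sigma=\lambda$ and $t=|\nabla u_{n}(x)|/\lambda$ and integrating,
$$
\int_{\Omega}\Phi(|\nabla u_{n}|)\,dx\geq \lambda^{l}\int_{\Omega}\Phi(|\nabla u_{n}|/\lambda)\,dx>\lambda^{l}.
$$
Sending $\lambda\uparrow\lambda_{n}$ produces $\int_{\Omega}\Phi(|\nabla u_{n}|)\,dx\geq \|u_{n}\|^{l}$, as desired.

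The only non-routine ingredient is the extraction of $t\Phi'(t)/\Phi(t)\geq \ell$ from $(\phi_4)$, which leans on the monotonicity of $g(t)/t^{\ell-1}$; everything else is a direct manipulation of the Luxemburg infimum in the spirit of the standard Fukagai--Narukawa estimates.
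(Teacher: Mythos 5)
Your proof is correct and follows essentially the same route as the paper, which simply defers to \cite[Lemma 2.1]{FN}: the standard Fukagai--Ito--Narukawa argument is precisely the scaling inequality $\Phi(\sigma t)\geq\sigma^{l}\Phi(t)$ for $\sigma\geq 1$ derived from the lower index bound $t\Phi'(t)/\Phi(t)\geq l$, combined with the Luxemburg-norm infimum exactly as you do. Your explicit verification that $(\phi_4)$ also yields the index bound (via the monotonicity of $t\phi(t)/t^{\ell-1}$) is a welcome detail that the paper leaves implicit.
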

\begin{proof}
	The proof is similar to that given in \cite[Lemma 2.1]{FN}.
\end{proof}

\section{Proof of Theorem \ref{T1}}
Note that under hypotheses $(\phi_1)-(\phi_3)$, it is well known that $\Phi$ does not satisfy the $\Delta_2$-condition, and as a consequence, $ W^{1,\Phi}_0(\Omega)$ is nonreflexive anymore. Under these conditions,  it is also well known that there exists $u \in W_0^{1,\Phi}(\Omega)$ such that
$$
\int_{\Omega}\Phi(|\nabla u|)\,dx=+ \infty.
$$
However, independent of the $\Delta_2$-condition, $(f_0)$ guarantees that the embedding \linebreak $W_0^{1,\Phi}(\Omega)\hookrightarrow L^A(\Omega)$ is continuous. Having this in mind, the energy functional \linebreak $I: W_0^{1,\Phi}(\Omega) \to \mathbb{R}\cup \{+\infty\}$ associated with $(P)$ given by
\begin{equation} \label{I}
I(u)=\int_{\Omega}\Phi(|\nabla u|)dx -\lambda\int_{\Omega}F(x,u) dx,~u\in  W_0^{1,\Phi}(\Omega)
\end{equation}
is well defined.  Hereafter, we denote by $D_\Phi \subset  W_0^{1,\Phi}(\Omega)$ the  set
$$
D_\Phi=\left\{u \in W_0^{1,\Phi}(\Omega)\,:\, \int_{\Omega}\Phi(|\nabla u|)\,dx<+\infty\right\}.
$$
The reader is invited to observe that $D_\Phi= W_0^{1,\Phi}(\Omega)$ when $\Phi$ satisfies the $\Delta_2$-condition.

As an immediate consequence of the above remarks, we cannot guarantee that $I$ belongs to $C^{1}(W_0^{1,\Phi}(\Omega),\mathbb{R})$. However, the functional $\mathcal{F}: W_0^{1,\Phi}(\Omega) \to \mathbb{R} $ given by
$$
\mathcal{F}(u)=\int_{\Omega}F(x,u) dx
$$
belongs to $C^{1}( W_0^{1,\Phi}(\Omega),\mathbb{R})$ and its derivative is given by
$$
\mathcal{F}'(u)v=\int_{\mathbb{R}^N}f(x,u)v \,dx, \quad \forall u,v \in  W_0^{1,\Phi}(\Omega).
$$

Related to the functional $Q: W_0^{1,\Phi}(\Omega) \to \mathbb{R} \cup\{+\infty\}$ given by
\begin{equation} \label{Q}
Q(u)=\int_{\Omega}\Phi(|\nabla u|)dx,
\end{equation}
we know that it is strictly convex   and  l.s.c. with respect to the weak$^*$ topology. Furthermore, $Q \in C^{1}(W_0^{1,\Phi}(\Omega),\mathbb{R})$ when $\Phi$ and $\widetilde\Phi$ satisfy the $\Delta_2$-condition.

From the above commentaries, in the present paper we will use a minimax method developed by Szulkin  \cite{Szulkin}. In this sense, we will say that $u \in D_\Phi$ is a critical point for $I$ if $0 \in \partial I(u) = \partial Q(u) - \mathcal{F}'(u)$, since $\mathcal{F} \in C^{1}( W_0^{1,\Phi}(\Omega),\mathbb{R})$. Then $u \in D_\Phi$ is a critical point of $I$ if, and only if, $\mathcal{F}'(u) \in \partial Q(u)$, what, since $Q$ is convex, is equivalent to
\begin{equation} \label{E1}
Q(v)-Q(u)\geq \lambda\int_{\Omega}f(x,u)(v-u)\,dx, \quad \forall v \in   W_0^{1,\Phi}(\Omega).
\end{equation}

In the case where  $\Phi$ and $\widetilde\Phi$ satisfy the $\Delta_2$-condition, we would like to point out that the energy functional $I \in C^{1}(  W_0^{1,\Phi}(\Omega),\mathbb{R})$, and the last inequality is equivalent to
\begin{equation} \label{E1.0}
I'(u)v=0, \quad \forall v \in   W_0^{1,\Phi}(\Omega),
\end{equation}
that is,
$$
\int_{\Omega}\phi(|\nabla u|)\nabla u \nabla v\,dx=\lambda\int_{\Omega}f(x,u)v\,dx, \quad \forall v \in  W_0^{1,\Phi}(\Omega).
$$
The last identity ensures that $u$ is a weak solution of $(P)$. However, when $\Phi$ does not satisfy the $\Delta_2$-condition, the above conclusion is not immediate, and a careful analysis must be done. For more details see Lemma \ref{Lema0} below.

From now on, let us denote by $\| \cdot\|$ the usual norm in  $ W_0^{1,\Phi}(\Omega)$ given by
$$
\|u\|=\|\nabla u\|_{\Phi}
$$
where
$$
\|\nabla u\|_{\Phi}=\inf\left\{\lambda >0\,:\, \int_{\Omega}\Phi\left(\frac{| \nabla u|}{\lambda}\right)\,dx \leq 1\right\}.
$$
Moreover, we also denote by $\text{dom}(\phi(t)t) \subset  W_0^{1,\Phi}(\Omega)$ the following set
$$
\text{dom}(\phi(t)t)=\left\{ u \in  W_0^{1,\Phi}(\Omega)\,:\, \int_{\Omega}\widetilde{\Phi}(\phi(|\nabla u|)|\nabla u|)\,dx<\infty \right\}.
$$
As $\widetilde{\Phi}$ verifies the $\Delta_2$-condition, the above set can be written of the form
$$
\text{dom}(\phi(t)t)=\left\{ u \in   W_0^{1,\Phi}(\Omega)\,:\, \phi(|\nabla u|)|\nabla u|  \in L^{\widetilde{\Phi}}(\Omega) \right\}.
$$
The set $\text{dom}(\phi(t)t)$ is not empty, since it is easy to see that $C_0^{\infty}(\Omega) \subset \text{dom}(\phi(t)t)$.

\begin{lemma}\label{dominio} Suppose $(\phi_{1})-(\phi_{2})$. For each $u \in D_\Phi$, there is a sequence $(u_n) \subset  \text{dom}(\phi(t)t)$ such that
	$$
|u_n|\leq |u|,\qquad\int_{\Omega}\Phi(|\nabla u_n|)\,dx \leq \int_{\Omega}\Phi(|\nabla u|)\,dx  \quad \mbox{and} \quad \|u-u_n\| \leq {1}/{n}.
	$$	
\end{lemma}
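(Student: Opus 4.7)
The natural candidate is the scaling $u_n := \lambda_n u$ with $\lambda_n \in (0,1)$ and $\lambda_n \nearrow 1$ (and $u_n \equiv 0$ if $u \equiv 0$). Two of the three conclusions then come for free. First, $|u_n| = \lambda_n |u| \leq |u|$ pointwise. Second, convexity of $\Phi$ with $\Phi(0) = 0$ yields $\Phi(\lambda_n s) \leq \lambda_n \Phi(s) \leq \Phi(s)$, so
\[
\int_\Omega \Phi(|\nabla u_n|)\,dx \leq \int_\Omega \Phi(|\nabla u|)\,dx.
\]
Since $\|u - u_n\| = (1-\lambda_n)\|u\|$, the norm bound $\|u - u_n\| \leq 1/n$ is achieved by a choice such as $\lambda_n := 1 - \frac{1}{n(\|u\|+1)}$.

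The core of the plan is therefore to check that $\lambda u \in \text{dom}(\phi(t)t)$ for every $\lambda \in (0,1)$ and every $u \in D_\Phi$. Using the identity $\Phi'(s) = s\phi(s)$ (immediate from $\Phi(s) = \int_0^{|s|}\tau \phi(\tau)\,d\tau$) together with Young's equality $\widetilde\Phi(\Phi'(s)) = s\Phi'(s) - \Phi(s)$, one rewrites
\[
\widetilde\Phi\bigl(\lambda s\, \phi(\lambda s)\bigr) = \lambda s\, \Phi'(\lambda s) - \Phi(\lambda s).
\]
The subgradient inequality $\Phi'(\lambda s)\,(1-\lambda) s \leq \Phi(s) - \Phi(\lambda s)$, which is just convexity of $\Phi$ applied between the points $\lambda s$ and $s$, then yields the pointwise estimate
\[
\widetilde\Phi\bigl(\lambda s\, \phi(\lambda s)\bigr) \leq \frac{\lambda}{1-\lambda}\bigl(\Phi(s) - \Phi(\lambda s)\bigr) - \Phi(\lambda s) \leq \frac{\lambda}{1-\lambda}\,\Phi(s).
\]
Integrating this with $s = |\nabla u(x)|$ and using $u \in D_\Phi$ controls $\int_\Omega \widetilde\Phi(\phi(|\nabla u_n|)|\nabla u_n|)\,dx$ by $\frac{\lambda_n}{1-\lambda_n}\int_\Omega \Phi(|\nabla u|)\,dx < \infty$, placing $u_n$ in $\text{dom}(\phi(t)t)$.

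There is no serious obstacle here: the scaling by $\lambda_n < 1$ is precisely the trick that compensates for the failure of the $\Delta_2$-condition on $\Phi$, since the estimate above never requires controlling $\Phi$ or $\widetilde\Phi \circ \Phi'$ at an argument larger than $|\nabla u|$ itself. The only care needed is to keep $\lambda_n$ bounded away from $1$ fast enough for each fixed $n$, which the explicit choice above handles, so the whole argument reduces to the elementary convex-analysis computation sketched above.
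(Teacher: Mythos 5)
Your argument is correct and complete: the scaling $u_n=\lambda_n u$ handles the first two properties by positive homogeneity of the Luxemburg norm and convexity of $\Phi$, and the pointwise bound $\widetilde\Phi\bigl(\Phi'(\lambda s)\bigr)=\lambda s\,\Phi'(\lambda s)-\Phi(\lambda s)\leq \frac{\lambda}{1-\lambda}\Phi(s)$ correctly places $\lambda u$ in $\text{dom}(\phi(t)t)$ for every $u\in D_\Phi$ and $\lambda\in(0,1)$. The paper gives no proof of this lemma beyond citing \cite[Lemma 3.2]{AEM}, and your combination of the scaling $(1-\varepsilon)u$ with the identity $\phi(t)t^{2}=\Phi(t)+\widetilde{\Phi}(\phi(t)t)$ and the convexity (subgradient) inequality is precisely the device used there and again in Lemma \ref{Domin} of this paper, so this is essentially the same approach.
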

\begin{proof}
	See \cite[Lemma 3.2]{AEM}.
\end{proof}

\begin{lemma}\label{I-coercive}
	Assume that $(\phi_1)-(\phi_3)$ and $(f_0)$ hold. Then, functional $I$ is coercive.
\end{lemma}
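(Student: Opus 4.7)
The plan is to establish $I(u) \to +\infty$ as $\|u\| \to +\infty$ by obtaining a lower bound of the shape
\[
I(u) \ge \|u\|^l - (\text{terms growing strictly slower in } \|u\|),
\]
exploiting the gap $m_A < l$ between the two exponents in $(f_0)$ and $(\phi_3)$. The positive term $\|u\|^l$ will come from Lemma \ref{ed1}, while the subtracted terms will be controlled through the embedding $W_0^{1,\Phi}(\Omega) \hookrightarrow L^A(\Omega)$ and the fact that $m_A < \infty$ forces $A$ to satisfy the $\Delta_2$-condition.

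The first step is to integrate the growth bound in $(f_0)$ to obtain
\[
|F(x,t)| \leq \int_0^{|t|}|f(x,s)|\,ds \leq C\bigl(A(|t|) + |t|\bigr),\qquad (x,t)\in\Omega\times\mathbb{R},
\]
which immediately yields
\[
I(u) \geq \int_\Omega \Phi(|\nabla u|)\,dx - \lambda C\int_\Omega A(|u|)\,dx - \lambda C\int_\Omega |u|\,dx.
\]
The two negative contributions are handled by the standard modular--Luxemburg inequality for an N-function satisfying $\Delta_2$, together with the continuous embeddings $W_0^{1,\Phi}(\Omega) \hookrightarrow L^A(\Omega) \hookrightarrow L^1(\Omega)$ on the bounded domain $\Omega$:
\[
\int_\Omega A(|u|)\,dx \leq 1 + \|u\|_A^{m_A} \leq 1 + C_1^{m_A}\|u\|^{m_A},\qquad \int_\Omega |u|\,dx \leq C_2\|u\|,
\]
where $C_1,C_2>0$ are the embedding constants.

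For the positive term, Lemma \ref{ed1} supplies $\int_\Omega \Phi(|\nabla u|)\,dx \geq \|u\|^l$ for $\|u\|$ sufficiently large, so assembling the estimates yields, for $\|u\| \ge R_0$,
\[
I(u) \geq \|u\|^l - \lambda C\bigl(1 + C_1^{m_A}\|u\|^{m_A}\bigr) - \lambda C C_2\|u\| \; \longrightarrow \; +\infty,
\]
since $1 \le m_A < l$ and $1 < l$.

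The one non-routine ingredient is precisely Lemma \ref{ed1}: it stands in for the norm--modular equivalence one would have if $\Phi$ satisfied the $\Delta_2$-condition, and it is the mechanism that allows the coercivity argument to survive the loss of reflexivity of $W_0^{1,\Phi}(\Omega)$. Everything else is standard Orlicz calculus once one records that $m_A < \infty$ delivers the $\Delta_2$-condition on $A$ and hence the modular bound above.
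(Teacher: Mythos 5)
Your proposal is correct and follows essentially the same route as the paper: bound $|F(x,t)|\le C(A(|t|)+|t|)$ from $(f_0)$, control the two negative terms via the embeddings $W_0^{1,\Phi}(\Omega)\hookrightarrow L^A(\Omega)\hookrightarrow L^1(\Omega)$ and the modular estimate coming from $m_A<\infty$, and get the dominant term $\|u\|^l$ from Lemma \ref{ed1}, concluding by $1<m_A<l$. The paper's proof is exactly this inequality $I(u)\ge \|u\|^l-\lambda C_1\|u\|^{m_A}-\lambda C_2\|u\|$, stated with less detail.
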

\begin{proof}
	Indeed, supposing  $\|u\|\geq 1$, using $(f_0)$ and the embeddings $W_0^{1,\Phi}(\Omega)\hookrightarrow L_A(\Omega)$ and $W_0^{1,\Phi}(\Omega) \hookrightarrow L^1(\Omega)$, we conclude that there exist positive constants $C_1$ and $C_2$ satisfying
	$$
	I(u)\geq \|u\|^l- \lambda C_1\|u\|^{m_A}-\lambda C_2\|u\|.
	$$
Recalling the $l>m_A>1$, we get the desired result.
\end{proof}

\begin{lemma}\label{negative}
	Assume that $(\phi_1)-(\phi_3)$ and $(f_0)$, $(f_2)$ hold. Then, there exists $\lambda_*>0$ such that $I$ is bounded from below in  $W^{1,\Phi}_0(\Omega)$ and  $\displaystyle \inf_{u \in W_0^{1,\Phi}(\Omega)} I(u)<0$ for all $\lambda>\lambda_*$.
\end{lemma}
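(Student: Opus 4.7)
The plan splits into two claims. For \emph{boundedness from below} I combine the coercivity estimate established in Lemma~\ref{I-coercive} with a direct bound on $\mathcal{F}$ over the unit ball. For the \emph{strict negativity of the infimum} I exhibit a concrete $u_0\in D_\Phi$ with $\mathcal{F}(u_0)>0$ and choose $\lambda_*$ large enough so that $-\lambda\mathcal{F}(u_0)$ outweighs $Q(u_0)$.

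First, whenever $\|u\|\geq 1$, the inequality from the proof of Lemma~\ref{I-coercive},
$$I(u)\geq \|u\|^l-\lambda C_1\|u\|^{m_A}-\lambda C_2\|u\|,$$
is bounded below on the region $\{\|u\|\geq 1\}$ since $l>m_A>1$. For $\|u\|\leq 1$ we have $Q(u)\geq 0$, and integrating $(f_0)$ gives $|F(x,t)|\leq C(A(|t|)+|t|)$; together with the continuous embeddings $W_0^{1,\Phi}(\Omega)\hookrightarrow L^A(\Omega)$ and $W_0^{1,\Phi}(\Omega)\hookrightarrow L^1(\Omega)$, this yields $|\mathcal{F}(u)|\leq C'$ on the unit ball, so $I(u)\geq -\lambda C'$ there. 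On $W_0^{1,\Phi}(\Omega)\setminus D_\Phi$, $I=+\infty$ by definition, hence $I$ is bounded below on all of $W_0^{1,\Phi}(\Omega)$.

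For the strict negativity, by $(f_2)$ together with the fact that $F(\cdot,t_1)\in L^\infty(\Omega)$ coming from $(f_0)$, we have $\int_\Omega F(x,t_1)\,dx>0$. For $\epsilon>0$ let $\Omega_\epsilon:=\{x\in\Omega:d(x,\partial\Omega)<\epsilon\}$ and define
$$u_\epsilon(x):=t_1\min\!\left(\frac{d(x,\partial\Omega)}{\epsilon},1\right),\qquad x\in\Omega.$$
Then $u_\epsilon$ is Lipschitz with $u_\epsilon=0$ on $\partial\Omega$, $0\leq u_\epsilon\leq t_1$, and $|\nabla u_\epsilon|\leq t_1/\epsilon$ almost everywhere with support contained in $\Omega_\epsilon$; hence $u_\epsilon\in W_0^{1,\Phi}(\Omega)\cap L^\infty(\Omega)$ and $Q(u_\epsilon)\leq\Phi(t_1/\epsilon)|\Omega_\epsilon|<\infty$, i.e.\ $u_\epsilon\in D_\Phi$. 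By $(f_0)$, $|F(x,s)|\leq M$ for every $(x,s)\in\Omega\times[0,t_1]$, so
$$\mathcal{F}(u_\epsilon)\geq \int_{\Omega\setminus\Omega_\epsilon}F(x,t_1)\,dx-M|\Omega_\epsilon|\longrightarrow \int_\Omega F(x,t_1)\,dx>0\qquad(\epsilon\to 0^+).$$
Fix $\epsilon_0>0$ with $\mathcal{F}(u_{\epsilon_0})>0$ and set $\lambda_*:=Q(u_{\epsilon_0})/\mathcal{F}(u_{\epsilon_0})$. Then $I(u_{\epsilon_0})=Q(u_{\epsilon_0})-\lambda\mathcal{F}(u_{\epsilon_0})<0$ for every $\lambda>\lambda_*$, proving $\inf I<0$.

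The only delicate step is producing $u_0\in D_\Phi$ that simultaneously equals $t_1$ on a set of nearly full measure (so the a.e.\ positivity of $F(\cdot,t_1)$ from $(f_2)$ dominates the integral), vanishes on $\partial\Omega$, and has finite $\Phi$-Dirichlet energy in spite of $\Phi$ possibly failing the $\Delta_2$-condition. The distance-function tapering above achieves all three at once: all the gradient mass sits in the thin strip $\Omega_\epsilon$, where $|\nabla u_\epsilon|\leq t_1/\epsilon$ is uniformly bounded, and the $[0,t_1]$-uniform bound on $F$ from $(f_0)$ controls the loss of positivity incurred in that strip.
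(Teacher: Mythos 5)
Your proof is correct and follows essentially the same strategy as the paper: coercivity plus a bound on $\mathcal{F}$ over the unit ball for boundedness from below, and a plateau function equal to $t_1$ off a thin boundary strip for the negativity of the infimum. The only difference is that the paper merely posits the existence of such a $u_0\in W_0^{1,\Phi}(\Omega)$, whereas your explicit distance-function construction additionally verifies $Q(u_0)<\infty$, i.e.\ $u_0\in D_\Phi$ --- a point worth checking in the non-$\Delta_2$ setting, since $I(u_0)=+\infty$ would otherwise be possible.
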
	
\begin{proof} First of all, the fact that $I$ is coercive yields that there is $R>0$ such that
$$
I(u) \geq 1, \quad \mbox{for} \quad \|u\| \geq R.
$$	
On the other hand, by definition of $I$, a simple computation gives
$$
|I(u)| \leq K, \quad \mbox{for} \quad \|u\| \leq R.
$$	
From this, there is $M>0$ such that
$$
I(u) \geq -M, \quad \mbox{for all} \quad u \in W_0^{1,\Phi}(\Omega),
$$	
showing the boundedness of $I$ from below in $W_0^{1,\Phi}(\Omega)$. 	
	
Next, we will show that $\displaystyle \inf_{u \in W_0^{1,\Phi}(\Omega)} I(u)<0$. By $(f_2)$, there exists $t_1>0$ such that $F(x,t_1)>0$. Let $\Omega_1\subset \Omega$ be a compact subset large enough and $u_0\in W_0^{1,\Phi}(\Omega)$ such that $u_0(x)=t_1$ in $\Omega_1$ and $0\leq u_0(x)\leq t_1$ in $\Omega\setminus\Omega_1$. Note that $\{x\in\Omega:F(x,u_0(x))<0\}\subset \Omega\setminus\Omega_1$. So, using $(f_0)$,
\begin{eqnarray}\label{Negative}
	\int_{\Omega}F(x,u_0)dx
	&\geq & \int_{\Omega_1}F(x,t_1)dx-C\int_{\Omega\setminus\Omega_1}(A(u_0)+|u_0|)dx\nonumber\\
	&\geq & \int_{\Omega_1}F(x,t_1)dx-C|\Omega\setminus\Omega_1|(A(t_1)+|t_1|)>0	
\end{eqnarray}
provided that $|\Omega\setminus\Omega_1|$ is small enough. Thus $I(u_0) < 0$ for $\lambda > 0$ large
enough. This proves the lemma.
\end{proof}
From Lemmas \ref{I-coercive} and \ref{negative},  $I$ is bounded from below in  $W^{1,\Phi}_0(\Omega)$. Thereby, there is $(u_n) \subset W^{1,\Phi}_0(\Omega)$ such that
$$
I(u_n) \to I_{\infty}=\inf_{u \in W^{1,\Phi}_0(\Omega)}I(u) \quad \mbox{as} \quad n \to +\infty.
$$
Consequently, taking into account that $I$ is coercive, the sequence $(u_n)$ must be bounded in $W^{1,\Phi}_0(\Omega)$. Therefore, by Lemma \ref{Estrela}, for some subsequence denoted by itself, we obtain
$$
u_n \stackrel{*}{\rightharpoonup} u_1 \quad \mbox{in} \quad  W^{1,\Phi}_0(\Omega).
$$
Now, applying \cite[Lemma 3.2]{GKMS} and \cite{ET}, it follows that $I$ is weak$^*$ lower semicontinuous. As a consequence,
$$
\liminf_{n \to +\infty}I(u_n) \geq I(u_1).
$$
The last estimate implies that
$$
I(u_1)=I_{\infty}.
$$
From this, $u_1\in W_0^{1,\Phi}(\Omega)\setminus\{0\}$, $I(u_1)<0$ and
\begin{equation} \label{sol-u_1}
Q(v) -Q(u_1)\geq \lambda\int_{\Omega}f(x,u_1)(v-u_1)\,dx, \quad \forall v \in   W_0^{1,\Phi}(\Omega).
\end{equation}
\begin{lemma}\label{Domin}
	$u_1\in D_\Phi\cap dom(\phi(t)t)$.
\end{lemma}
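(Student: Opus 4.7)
The plan is to split the claim into the two containments $u_1\in D_\Phi$ and $u_1\in\text{dom}(\phi(t)t)$; the first is essentially free, while the second requires extracting sharper integrability from the subdifferential inequality \eqref{sol-u_1}.

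For $u_1\in D_\Phi$, I would argue that $\mathcal{F}(u_1)=\int_\Omega F(x,u_1)\,dx$ is finite. The growth bound $|F(x,t)|\leq C\bigl(A(|t|)+|t|\bigr)$ coming from $(f_0)$, combined with the continuous embeddings $W_0^{1,\Phi}(\Omega)\hookrightarrow L^A(\Omega)$ and $W_0^{1,\Phi}(\Omega)\hookrightarrow L^1(\Omega)$, together with the fact that $A$ satisfies the $\Delta_2$-condition (a direct consequence of $m_A<\infty$, which is equivalent to $tA'(t)/A(t)\leq m_A$), yields $\mathcal{F}(u_1)\in\mathbb{R}$. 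Since $I(u_1)=I_\infty\in\mathbb{R}$, the identity $Q(u_1)=I(u_1)+\lambda\mathcal{F}(u_1)$ then gives $Q(u_1)<+\infty$.

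For $u_1\in\text{dom}(\phi(t)t)$ I would test \eqref{sol-u_1} against the radial inward perturbation $v=(1-\epsilon)u_1$, $\epsilon\in(0,1)$, which evidently lies in $W_0^{1,\Phi}(\Omega)$. A simple rearrangement produces
$$
\int_\Omega\frac{\Phi(|\nabla u_1|)-\Phi\bigl((1-\epsilon)|\nabla u_1|\bigr)}{\epsilon}\,dx \;\leq\; \lambda\int_\Omega f(x,u_1)u_1\,dx.
$$
Since $\Phi$ is convex, for every $a\geq 0$ the scalar map $\epsilon\mapsto[\Phi(a)-\Phi((1-\epsilon)a)]/\epsilon$ is nonincreasing on $(0,1)$ and converges pointwise to $a^2\phi(a)$ as $\epsilon\downarrow 0$. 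Monotone Convergence then gives
$$
\int_\Omega\phi(|\nabla u_1|)|\nabla u_1|^2\,dx \;\leq\; \lambda\int_\Omega f(x,u_1)u_1\,dx,
$$
the right-hand side being finite by the same reasoning as above, now invoking the pointwise bound $|f(x,u_1)u_1|\leq C\bigl(m_A A(|u_1|)+|u_1|\bigr)$ from $(f_0)$. Combining this with $u_1\in D_\Phi$ and the Legendre-transform identity $\widetilde{\Phi}(\phi(t)t)=t^2\phi(t)-\Phi(t)$ finally yields
$$
\int_\Omega\widetilde{\Phi}\bigl(\phi(|\nabla u_1|)|\nabla u_1|\bigr)\,dx \;\leq\; \int_\Omega\phi(|\nabla u_1|)|\nabla u_1|^2\,dx \;<\; +\infty,
$$
which is exactly $u_1\in\text{dom}(\phi(t)t)$.

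The main delicacy is the monotone-convergence step: one must verify the claimed monotonicity of the difference quotient (which reduces to concavity in $\epsilon$ of $\Phi(a)-\Phi((1-\epsilon)a)$ and its vanishing at $\epsilon=0$) and check that plugging $v=(1-\epsilon)u_1$ into \eqref{sol-u_1} is legitimate. Both are routine; the real content is the recognition that the subdifferential inequality \eqref{sol-u_1}, despite the loss of smoothness of $I$ caused by the failure of $\Delta_2$ for $\Phi$, is still rich enough to produce the Ambrosetti--Rabinowitz-type bound $\int\phi(|\nabla u_1|)|\nabla u_1|^2\leq\lambda\int f(x,u_1)u_1$ that gives simultaneously both containments.
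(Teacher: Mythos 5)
Your argument is correct and follows essentially the same route as the paper: test the subdifferential inequality \eqref{sol-u_1} with an inward radial perturbation of $u_1$ to obtain $\int_\Omega\phi(|\nabla u_1|)|\nabla u_1|^2\,dx\leq\lambda\int_\Omega f(x,u_1)u_1\,dx$, then conclude via the identity $\widetilde{\Phi}(\phi(t)t)=t^2\phi(t)-\Phi(t)$. Your two local variations --- deriving $Q(u_1)<+\infty$ from the finiteness of $I(u_1)$ and $\mathcal{F}(u_1)$ instead of taking $v=0$ in \eqref{sol-u_1}, and passing to the limit by monotone convergence of the convex difference quotient rather than by the paper's mean-value manipulation with $\theta_n(x)$ --- are both valid and, if anything, slightly cleaner.
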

\begin{proof}
	Making $v=0$ in \eqref{sol-u_1}, using $(f_0)$ and H\"{o}lder's inequality we find
	\begin{eqnarray}
		\int_{\Omega}\Phi(|\nabla u_1|)\,dx = Q(u_1)  \leq \lambda\int_{\Omega} f(x,u_1)u_1\,dx<\infty.	\nonumber
	\end{eqnarray}
	This proves that $u_1\in D_\Phi$.
	
	In the sequel, we will show that $u_1 \in \text{dom}(\phi(t)t)$.
	Setting $v=\left(1-\frac{1}{n}\right)u_1$ in \eqref{sol-u_1}, we get
	$$
	\begin{array}{l}
	\displaystyle \int_{\Omega}\left[\Phi(|\nabla (1-{1}/{n})u_1|)-\Phi(|\nabla u_1|)\right]\,dx=Q\left((1-{1}/{n})u_1\right)-Q(u_1)  \leq 	\displaystyle-\frac{1}{n}\lambda \int_{\Omega}f(x,u_1)u_1\,dx.
	\end{array}	
	$$
	Since $\Phi$ is $C^{1}$, there exists $\theta_n(x) \in [0,1]$ such that
	$$
	\frac{\Phi(|\nabla u_1-\frac{1}{n}\nabla u_1|)-\Phi(|\nabla u_1|)}{-\frac{1}{n}}=\phi(|(1-{\theta_n(x)}/{n})\nabla u_1|)(1-{\theta_n(x)}/{n})|\nabla u_1|^{2}.
	$$
	Recalling that $0< 1-{\theta_n(x)}/{n}\leq 1$, we know that
	$$
	1-{\theta_n(x)}/{n} \geq (1-{\theta_n(x)}/{n})^{2}:=g_n(x),
	$$
	which leads to
	$$
	\int_{\Omega}\phi(|g_n(x)\nabla u_1|)|g_n(x)\nabla u_1|^{2}\,dx \leq \lambda\int_{\Omega}f(x,u_1)u_1\,dx, \quad \forall n \in \mathbb{N}.
	$$
Letting  $n \to +\infty$, we derive that
	
	$$
	\int_{\Omega}\phi(|\nabla u_1|)|\nabla u_1|^{2}\,dx\leq\lambda\int_{\Omega}f(x,u_1)u_1\,dx.
	$$
	Recalling that
	$$
	\phi(t)t^{2}=\Phi(t)+\widetilde{\Phi}(\phi(t)t), \quad \forall t \in \mathbb{R}
	$$
	so, we have
	$$
	\phi(|\nabla u_1|)|\nabla u_1|^{2}=\Phi(|\nabla u_1|)+\widetilde{\Phi}(\phi(|\nabla u_1|)|\nabla u_1|),
	$$
	which leads to
	$$
	\int_{\Omega}\phi(|\nabla u_1|)|\nabla u_1|^{2}\,dx=\int_{\Omega}\Phi(|\nabla u_1|)\,dx+\int_{\Omega}\widetilde{\Phi}(\phi(|\nabla u_1|)|\nabla u_1|)\,dx.
	$$
As $\displaystyle \int_{\Omega}\phi(|\nabla u_1|)|\nabla u_1|^{2}\,dx$ and $\displaystyle \int_{\Omega}\Phi(|\nabla u_1|)\,dx$ are finite, we infer that $\displaystyle \int_{\R^N}\widetilde{\Phi}(\phi(|\nabla u_1|)|\nabla u_1|)\,dx$ is also finite, then $u_1 \in \text{dom}(\phi(t)t)$. This finishes the proof.	
\end{proof}

\begin{lemma} \label{Lema0}
	Suppose $(\phi_{1})-(\phi_{2})$ and let $u \in  D_\Phi $ be a critical point of $I$. If $u \in \text{dom}(\phi(t)t)$, then it is a weak solution for $(P)$, that is,
	$$
	\int_{\mathbb{R}^N}\phi(|\nabla u|)\nabla u \nabla v\,dx =\lambda\int_{\mathbb{R}^N}f(x,u)v\,dx,\quad \forall v \in W_0^{1,\Phi}(\Omega).
	$$	
\end{lemma}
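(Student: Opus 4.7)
The plan is to exploit the Szulkin critical-point condition
\[
Q(v) - Q(u) \geq \lambda\int_\Omega f(x,u)(v-u)\,dx, \quad \forall v \in W_0^{1,\Phi}(\Omega),
\]
through carefully chosen test functions and then to unlock linearity via a scaling argument. The main subtlety is that the natural perturbation $v = u + tw$ with $w \in C_0^\infty(\Omega)$ need not lie in $D_\Phi$ because $\Phi$ does not satisfy the $\Delta_2$-condition. I would circumvent this by using the convex combination $v := (1-t)u + tw$ for $t \in (0,1)$, whose admissibility is immediate from convexity of $\Phi$:
\[
\int_\Omega \Phi(|(1-t)\nabla u + t\nabla w|)\,dx \leq (1-t)Q(u) + tQ(w) < \infty.
\]

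Substituting this test, dividing by $t > 0$, and letting $t \to 0^+$ is the core step. Set
\[
g_t(x) := \frac{\Phi(|(1-t)\nabla u(x) + t\nabla w(x)|) - \Phi(|\nabla u(x)|)}{t}.
\]
Since $t \mapsto \Phi(|(1-t)\nabla u + t\nabla w|)$ is convex, $g_t$ is non-decreasing in $t > 0$ and decreases pointwise to $g_0 := \phi(|\nabla u|)\nabla u \cdot (\nabla w - \nabla u)$ as $t \downarrow 0$. The essential integrability check is that $g_1 - g_0 \in L^1(\Omega)$, and this rests precisely on the hypothesis $u \in \text{dom}(\phi(t)t)$: the identity $s^2\phi(s) = \Phi(s) + \widetilde{\Phi}(s\phi(s))$ together with $u \in D_\Phi \cap \text{dom}(\phi(t)t)$ yields $\phi(|\nabla u|)|\nabla u|^2 \in L^1(\Omega)$, while H\"older's inequality in Orlicz spaces bounds $\int_\Omega \phi(|\nabla u|)\nabla u \cdot \nabla w\,dx$ by $2\|\phi(|\nabla u|)|\nabla u|\|_{\widetilde{\Phi}} \|\nabla w\|_\Phi < \infty$. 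Dominated convergence then produces
\[
\int_\Omega \phi(|\nabla u|)\nabla u \cdot (\nabla w - \nabla u)\,dx \geq \lambda\int_\Omega f(x,u)(w - u)\,dx, \quad \forall w \in C_0^\infty(\Omega).
\]

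Setting
\[
L(w) := \int_\Omega \phi(|\nabla u|)\nabla u \cdot \nabla w\,dx - \lambda\int_\Omega f(x,u)w\,dx, \quad R := \int_\Omega \phi(|\nabla u|)|\nabla u|^2\,dx - \lambda\int_\Omega f(x,u)u\,dx,
\]
the previous display reads $L(w) \geq R$ for every $w \in C_0^\infty(\Omega)$. Since $C_0^\infty(\Omega)$ is a vector space and $L$ is linear, substituting $\pm n w$ for $n \in \mathbb{N}$ gives $L(w) \geq R/n$ and $L(w) \leq -R/n$ for every $n$, whence $L(w) = 0$ on $C_0^\infty(\Omega)$. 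The identity is then extended to every $v \in W_0^{1,\Phi}(\Omega)$ by weak$^*$ density: $L$ is weak$^*$-continuous on $W_0^{1,\Phi}(\Omega)$ because $\phi(|\nabla u|)\partial_i u \in L^{\widetilde{\Phi}}(\Omega) = E^{\widetilde{\Phi}}(\Omega)$ (using that $\widetilde{\Phi}$ satisfies the $\Delta_2$-condition and $u \in \text{dom}(\phi(t)t)$), and $f(x,u) \in E^{\widetilde{\Phi}}(\Omega)$ through $(f_0)$ and the embedding $W_0^{1,\Phi}(\Omega) \hookrightarrow L^A(\Omega)$, so that Lemma \ref{Estrela} applies. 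The principal obstacle throughout is the first step: the failure of $\Delta_2$ forbids the usual perturbation $u + tw$, and the whole scheme hinges on finding a perturbation whose modular integrability matches exactly the regularity supplied by $u \in \text{dom}(\phi(t)t)$.
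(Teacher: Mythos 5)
Your proposal is correct and follows essentially the same route as the paper's proof: a convex-combination perturbation of $u$ by a smooth function (so the test function stays in $D_\Phi$ without $\Delta_2$), passage to the limit in the difference quotient, the observation that a linear functional bounded below on the vector space $C_0^\infty(\Omega)$ must vanish, and extension by weak$^*$ density using $\phi(|\nabla u|)|\nabla u|\in L^{\widetilde{\Phi}}(\Omega)$. The only differences are cosmetic — you use the plain convex combination $(1-t)u+tw$ where the paper uses $\frac{1}{1-\epsilon/2}((1-\epsilon)u+\epsilon v)$, and you spell out the monotonicity/integrability justification for the limit passage in more detail than the paper does.
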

\begin{proof}
	Hereafter, we adapt the arguments found in  \cite[Lemma 4.5]{EGS}. Given $\epsilon \in (0,\frac{1}{2})$ and $v \in C_0^{\infty}(\R^N)$, we set the function
	$$
	v_\epsilon =\frac{1}{1-\frac{\epsilon}{2}}((1-\epsilon)u+\epsilon v).
	$$	
	Hence, as $u$ is a critical point of $I$,
	$$
	\int_{\mathbb{R}^N}\Phi(|\nabla v_\epsilon|)\,dx - \int_{\R^N}\Phi(|\nabla u|)\,dx \geq \lambda\int_{\R^N}f(x,u)(v_\epsilon-u)\,dx, \forall \epsilon \in (0,{1}/{2}),
	$$	
	and so,
	$$
	\frac{\int_{\mathbb{R}^N}\Phi(|\nabla v_\epsilon|)\,dx- \int_{\R^N}\Phi(|\nabla u|)\,dx}{\epsilon}  \geq \lambda\int_{\R^N}f(x,u)\left(\frac{v_\epsilon-u}{\epsilon}\right)\,dx.
	$$	
	 Taking the limit as $\epsilon \to 0$, we get
	$$
	\int_{\R^N}\phi(|\nabla u|)\nabla u (\nabla v -\nabla u/2) \, dx  \geq \lambda\int_{\R^N}f(x,u)(v-u/2)\,dx
	$$
	or equivalently
	$$
	\int_{\R^N}\phi(|\nabla u|)\nabla u \nabla v  \, dx -\lambda\int_{\R^N}f(x,u)v\,dx \geq A, \quad \forall v \in C_0^{\infty}(\R^N),
	$$
	where
	$$
	A=\frac{1}{2}\int_{\R^N}\phi(|\nabla u|)|\nabla u|^2   \, dx -\frac{\lambda}{2}\int_{\R^N}f(x,u)u\,dx.
	$$
	As $C_0^{\infty}(\R^N)$ is a vector space, the last inequality gives
	$$
	\int_{\R^N}\phi(|\nabla u|)\nabla u \nabla v  \, dx -\lambda\int_{\R^N}f(x,u)v\,dx=0, \quad \forall v \in C_0^{\infty}(\R^N).
	$$
	
	Now  the result follows using the weak$^*$ density of $C_0^{\infty}(\R^N)$ in $W^{1,\Phi}(\R^N)$ together with the fact that $\phi(|\nabla u|)|\nabla u|  \in L^{\widetilde{\Phi}}(\R^N)$.	
\end{proof}

As a byproduct of the last lemma is the following corollary.

\begin{corollary}\label{sol1}
	Suppose that $(\phi_{1})-(\phi_{3})$, $(f_0)$ and $(f_2)$ hold. Then $u_1$ is a solution of problem $(P)$ with $I(u_1)<0$ for $\lambda \geq \lambda^*$.
\end{corollary}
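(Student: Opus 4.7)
The plan is to assemble the corollary directly from the three preceding lemmas and the construction carried out just before Lemma \ref{Domin}. First I would recall that by Lemmas \ref{I-coercive} and \ref{negative}, the functional $I$ is coercive, bounded from below, and satisfies $\inf_{W_0^{1,\Phi}(\Omega)} I < 0$ whenever $\lambda > \lambda_{*}$; set $\lambda^{*} := \lambda_{*}$. A minimizing sequence $(u_n)$ is then bounded (by coercivity), so Lemma \ref{Estrela} (applicable since $\widetilde{\Phi}$ satisfies $\Delta_2$ thanks to $(\phi_3)$) yields $u_n \stackrel{*}{\rightharpoonup} u_1$ in $W_0^{1,\Phi}(\Omega)$ along a subsequence, and the weak$^*$ lower semicontinuity of $I$ gives $I(u_1) = I_\infty < 0$. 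Since $I(0) = 0$, this forces $u_1 \in W_0^{1,\Phi}(\Omega) \setminus \{0\}$.

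Next I would observe that any minimizer $u_1$ of a functional of the form $Q - \lambda \mathcal{F}$, with $Q$ convex and l.s.c.\ and $\mathcal{F} \in C^{1}$, automatically satisfies the Szulkin variational inequality \eqref{sol-u_1}, i.e.\ $u_1$ is a critical point of $I$ in Szulkin's sense. To see this, one tests with $v = u_1 + t(w - u_1)$ for $t \in (0,1)$, uses $I(u_1) \leq I(v)$, divides by $t$ and lets $t \to 0^{+}$, exploiting the convexity of $Q$ to pass to the inequality \eqref{sol-u_1}.

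Then I would apply Lemma \ref{Domin} (which uses only $(\phi_1)$--$(\phi_3)$ and $(f_0)$) to deduce that $u_1 \in D_\Phi \cap \text{dom}(\phi(t)t)$. Finally, Lemma \ref{Lema0} applies verbatim to $u_1$ and produces the weak formulation
\[
\int_{\Omega}\phi(|\nabla u_1|)\nabla u_1 \nabla v\,dx \;=\; \lambda\int_{\Omega} f(x,u_1)\,v\,dx, \qquad \forall\, v \in W_0^{1,\Phi}(\Omega),
\]
so that $u_1$ is the desired weak solution of $(P)$ with $I(u_1) < 0$.

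There is no real obstacle here: the corollary is a synthesis, and all the analytic difficulty has already been absorbed into Lemmas \ref{Estrela}, \ref{Domin} and \ref{Lema0}. The only point requiring mild care is the initial step, namely verifying that a global minimizer is a Szulkin critical point in the nonreflexive setting; this is why the argument is carried out through the sequential weak$^*$ compactness given by Lemma \ref{Estrela} rather than through the standard weak topology of a reflexive space.
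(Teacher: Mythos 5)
Your proposal is correct and follows essentially the same route as the paper, which states the corollary as an immediate byproduct of the chain: Lemmas \ref{I-coercive} and \ref{negative} give a bounded minimizing sequence, Lemma \ref{Estrela} and weak$^*$ lower semicontinuity produce the nontrivial minimizer $u_1$ with $I(u_1)<0$ satisfying \eqref{sol-u_1}, and Lemmas \ref{Domin} and \ref{Lema0} upgrade it to a weak solution. Your explicit verification that a global minimizer is a Szulkin critical point (testing with $v=u_1+t(w-u_1)$ and using the convexity of $Q$) is a detail the paper leaves implicit, but it is the standard argument and does not change the approach.
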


We are going to use the Mountain Pass Theorem to find a second
critical point of $I$. To this end, we define
$$
g(x,t):=
\left\{
\begin{array}{ll}
	0 & \mbox{if } t<0;\\
	f(x,t) & \mbox{if }0\leq t\leq u_1(x);\\
	f(x,u_1) & \mbox{if } t>u_1(x).
\end{array}
\right.
$$
Now, let us consider the functional  $J:W_0^{1,\Phi}(\Omega) \to \mathbb{R}\cup\{+\infty\}$ defined by
\begin{eqnarray}\label{J}
	J(u)=\int_{\Omega}\Phi(|\nabla u|)dx-\lambda\int_{\Omega}G(x,u)dx,
\end{eqnarray}
where $G(x,t):=\int_0^tg(x,s)ds$. Due to Lemma \ref{Lema0}, we can follow the same ideas found in \cite[Lemma 2]{MRep} to prove the lemma below

	\begin{lemma}\label{Auxiliar}
	If $u$ is a solution of problem
	$$
	\left\{
	\begin{array}{ll}
	\displaystyle-\Delta_{\Phi}{u}=\lambda g(x,u), \quad \mbox{in} \quad \Omega, \\
	u =0,\qquad\mbox{on}\qquad\partial\Omega,
	\end{array}
	\right.
	\leqno{(PA)}
	$$
	then $u\leq u_1$.	
\end{lemma}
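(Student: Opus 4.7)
The plan is to implement the classical weak comparison argument: test the weak formulations of both equations against $(u-u_1)^+$ and exploit the strict monotonicity of the $\Phi$-Laplacian vector field $\xi\mapsto\phi(|\xi|)\xi$. The crucial structural point is that on the set $\{u>u_1\}$ the truncation forces $g(x,u(x))=f(x,u_1(x))$ (using that $u_1\geq 0$, which is the standing premise making the definition of $g$ sensible), so after subtraction the nonlinear terms cancel exactly.

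Before that, I would need to upgrade the Szulkin-type inequality satisfied by $u$ into a genuine pointwise weak formulation. To do this I verify $u\in D_\Phi\cap\text{dom}(\phi(t)t)$ by repeating the argument of Lemma \ref{Domin} with $g$ in place of $f$: plugging $v=(1-1/n)u$ into the subdifferential inequality for $u$, dividing by $1/n$, and letting $n\to\infty$ yields
$$
\int_\Omega\phi(|\nabla u|)|\nabla u|^2\,dx\leq \lambda\int_\Omega g(x,u)u\,dx<\infty,
$$
and the identity $\phi(t)t^2=\Phi(t)+\widetilde\Phi(\phi(t)t)$ then delivers the required integrability. Lemma \ref{Lema0} now applies and gives
$$
\int_\Omega\phi(|\nabla u|)\nabla u\cdot\nabla w\,dx=\lambda\int_\Omega g(x,u)w\,dx,\qquad\forall w\in W_0^{1,\Phi}(\Omega),
$$
while Lemma \ref{Domin} together with Corollary \ref{sol1} gives the analogous identity for $u_1$ with $f$ in place of $g$.

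With both weak identities in hand, I would test them against $w=(u-u_1)^+\in W_0^{1,\Phi}(\Omega)$ and subtract. Using the chain rule $\nabla(u-u_1)^+=\chi_{\{u>u_1\}}(\nabla u-\nabla u_1)$ and the cancellation $g(x,u)=f(x,u_1)$ on $\{u>u_1\}$, the subtracted identity collapses to
$$
\int_{\{u>u_1\}}\bigl[\phi(|\nabla u|)\nabla u-\phi(|\nabla u_1|)\nabla u_1\bigr]\cdot(\nabla u-\nabla u_1)\,dx=0.
$$
Strict monotonicity of $\xi\mapsto\phi(|\xi|)\xi$, a direct consequence of the strict convexity of $\Phi$ granted by $(\phi_1)$, makes the integrand nonnegative with equality only when $\nabla u=\nabla u_1$; hence $\nabla(u-u_1)^+\equiv 0$ a.e., and the modular Poincar\'e inequality $\|v\|_\Phi\leq 2d\|\nabla v\|_\Phi$ on $W_0^{1,\Phi}(\Omega)$ forces $(u-u_1)^+\equiv 0$, i.e.\ $u\leq u_1$ a.e.\ in $\Omega$.

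The principal obstacle, and the only place where the nonreflexivity of $W_0^{1,\Phi}(\Omega)$ intervenes, is the first step, i.e.\ securing $u\in\text{dom}(\phi(t)t)$ so that arbitrary elements of $W_0^{1,\Phi}(\Omega)$ are admissible test functions. Once this is done, thanks to the $\Delta_2$-condition on $\widetilde\Phi$ coming from $(\phi_3)$, the H\"older pairing against $\phi(|\nabla u|)|\nabla u|\in L^{\widetilde\Phi}(\Omega)$ is legitimate and the remainder of the comparison proceeds exactly as in the reflexive case.
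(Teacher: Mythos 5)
Your proposal is correct and follows essentially the same route the paper intends: the paper does not write out a proof but states that, thanks to Lemma \ref{Lema0}, one follows \cite[Lemma 2]{MRep}, which is precisely the weak comparison argument you give --- upgrade the Szulkin critical-point inequality to a genuine weak formulation by first checking $u\in D_\Phi\cap\text{dom}(\phi(t)t)$, then test against $(u-u_1)^+$ and use the cancellation $g(x,u)=f(x,u_1)$ on $\{u>u_1\}$ together with the strict monotonicity of $\xi\mapsto\phi(|\xi|)\xi$. Your explicit attention to securing $u\in\text{dom}(\phi(t)t)$ before invoking Lemma \ref{Lema0} is exactly the adaptation the nonreflexive setting requires.
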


The next lemma establishes the first mountain pass geometry.
\begin{lemma}\label{MPG}
	Suppose that $(\phi_{1})-(\phi_{3})$, $(f_0)$ and $(f_1)$ hold. There exist $r,\rho>0$ such that $J(u)\geq \rho$ for all $u\in W_0^{1,\Phi}(\Omega)$ with $\|u\|=r$.
\end{lemma}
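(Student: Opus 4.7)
The plan is to estimate $\int_\Omega G(x,u)\,dx$ from above using $(f_0)$ and the sign information from $(f_1)$, to control $\int_\Omega \Phi(|\nabla u|)\,dx$ from below using Lemma \ref{ed1}, and to combine the two bounds on a sphere $\|u\|=r$.

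\emph{Sign and growth of $G$.} By $(f_1)$, $t\mapsto F(x,t)$ is decreasing on $[0,\delta)$, so $F(x,t)\le F(x,0)=0$ and $f(x,t)\le 0$ a.e.\ there. Inspecting the three branches of $g$, one checks that $G(x,u(x))\le 0$ for every $u(x)<\delta$ (the only delicate subcase is $u_1(x)<u(x)<\delta$, where $G(x,u)=F(x,u_1)+(u-u_1)f(x,u_1)$ with both summands non-positive since $u_1(x)\in[0,\delta)$) and $G(x,u(x))=0$ for $u(x)<0$. On $\{u\ge\delta\}$, $(f_0)$ gives $|G(x,u)|\le C(A(|u|)+|u|)$, and the monotonicity of $A(t)/t$ together with $u\ge\delta$ lets us absorb the linear term, yielding $|G(x,u)|\le C_1 A(|u|)$. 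Hence
\[
\int_\Omega G(x,u)\,dx\le C_1\int_\Omega A(|u|)\,dx.
\]

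\emph{Modular/norm estimates.} Since $m_A<l$, the continuous embedding $W_0^{1,\Phi}(\Omega)\hookrightarrow L^A(\Omega)$ holds, say $\|u\|_A\le K\|u\|$. Because $A$ verifies the $\Delta_2$-condition, the Luxemburg--modular inequality gives $\int_\Omega A(|u|)\,dx\le K^{m_A}\|u\|^{m_A}$ for $\|u\|$ large. In parallel, Lemma \ref{ed1} furnishes $R_0>0$ such that $\|u\|\ge R_0$ implies $\int_\Omega \Phi(|\nabla u|)\,dx\ge \|u\|^l$.

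\emph{Conclusion and main obstacle.} For $\|u\|=r\ge R_0$, the preceding bounds combine to
\[
J(u)\ge r^l-\lambda C_1K^{m_A}r^{m_A}=r^{m_A}\bigl(r^{l-m_A}-\lambda C_1K^{m_A}\bigr).
\]
Since $l>m_A$, fixing $r$ large enough that $r^{l-m_A}\ge 2\lambda C_1K^{m_A}$ makes the right-hand side at least $r^l/2$, and we set $\rho:=r^l/2>0$. The principal obstacle I foresee is the absence of a useful lower bound on $\int\Phi(|\nabla u|)$ for \emph{small} $\|u\|$ in the nonreflexive setting (where $\Phi$ need not satisfy $\Delta_2$): this prevents the classical small-$r$ argument and forces $r$ to be taken large, with a threshold depending on $\lambda$.
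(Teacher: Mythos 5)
Your argument is correct and follows essentially the same route as the paper's proof: split $\Omega$ according to whether $u<\delta$ (where $(f_1)$ and the definition of $g$ force $G(x,u)\le 0$) or $u\ge\delta$ (where $(f_0)$ controls $G$), bound the nonlinear term by a power of $\|u\|$ strictly less than $l$, invoke Lemma \ref{ed1}, and take $r$ large depending on $\lambda$. The only cosmetic difference is that you estimate $\int_\Omega G(x,u)\,dx$ by $C\|u\|^{m_A}$ through the modular of $A$, whereas the paper routes the same bound through an intermediate exponent $s\in(m_A,l)$ and the embedding into $L^s(\Omega)$; both give the identical conclusion $J(u)\ge \|u\|^{l}-\lambda C\|u\|^{\sigma}$ with $\sigma<l$.
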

\begin{proof}
	Condition $(f_1)$ implies that there exists $\delta>0$ such that
	$$F(x,t)\leq 0,~0\leq t\leq \delta.$$
	Define $\Omega_u:=[u>\min\{\delta,u_1\}]$. From the last inequality,
	$$G(x,u(x))=F(x,u(x))\leq 0,~\mbox{a.e. in }x\in \Omega\setminus\Omega_u.$$
	On the other hand,
	$$G(x,u(x))\leq 0,~\mbox{a.e. in }\Omega_u\cap[u_1<u<\delta].$$
	 Let  $\Omega_{u,\delta}:=\Omega_u\setminus [u_1<u<\delta]$, $\Omega_{u,\delta}^-:=\Omega_{u,\delta}\cap [u\leq u_1]$ and  $\Omega_{u,\delta}^+:=\Omega_{u,\delta}\cap [u> u_1]$. From $(f_0)$,  $W^{1,\Phi}_0(\Omega)\hookrightarrow L^\Phi(\Omega)\hookrightarrow L^l(\Omega)\hookrightarrow L^s(\Omega)$ for  $s\in(m_A,l)$. Then,
	\begin{eqnarray}\label{des}
		\lambda\int_{\Omega_{u,\delta}}G(x,u)dx &\leq& \lambda C\int_{\Omega_{u,\delta}^-}(A(|u|)+|u|)\,dx+\lambda \int_{\Omega_{u,\delta}^+}(F(x,u_1)+f(x,u_1)(u-u_1))\,dx\nonumber\\
		&\leq&\lambda \overline{C}\int_{\Omega_{u,\delta}}(A(|u|)+|u|)\,dx \nonumber\\
		&\leq&\lambda C_\delta\int_{\Omega_{u,\delta}}\max\left\{\left(\frac{u}{\delta}\right),\left(\frac{u}{\delta}\right)^{m_A}\right\} \,dx\nonumber\\
		&\leq&\lambda \frac{C_\delta}{\delta^s} \int_{\Omega_{u,\delta}} |u|^s\,dx \leq \overline{C}_\delta\|u\|^s,
	\end{eqnarray}
	where $\overline{C},~C_\delta$ and $\overline{C}_\delta$ are positive constants. Therefore, considering $\|u\|\geq 1$, by Lemma \ref{ed1} and \eqref{des},
	$$J(u)\geq \|u\|^l-\lambda\int_{\Omega_{u,\delta}}G(x,u)dx\geq \|u\|^l(1-\lambda\overline{C}_\delta \|u\|^{s-l}).$$
	This proves the lemma.
\end{proof}

\begin{lemma}\label{jc}
	$J$ is coercive.
\end{lemma}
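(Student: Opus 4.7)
The plan is to exploit the double truncation of $g$ to show that $G(x,u)$ grows at most linearly in $u$ with an Orlicz-integrable coefficient depending only on $u_1$, and then fall back on Lemma \ref{ed1} for the superlinear growth of the modular term.

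First I would observe that, since $g(x,\cdot)\equiv 0$ on $(-\infty,0)$ and $g(x,t)=f(x,u_1(x))$ for $t>u_1(x)$, the monotonicity of $t\mapsto a(t)t=A'(t)$ (from convexity of $A$) together with $(f_0)$ yields the uniform in $t$ bound
\[
|g(x,t)|\le C\bigl(a(u_1(x))u_1(x)+1\bigr),\qquad t\ge 0,
\]
a.e.\ in $\Omega$. Integrating from $0$ to $u(x)_+$ produces the pointwise estimate
\[
|G(x,u(x))|\le C\bigl(a(u_1(x))u_1(x)+1\bigr)|u(x)|.
\]

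The crux is then to show $a(u_1)u_1\in L^{\widetilde A}(\Omega)$. For this I would use the Young equality $\widetilde A(A'(t))=tA'(t)-A(t)$ together with the hypothesis $a(t)t^2\le m_A A(t)$ from $(f_0)$, which give
\[
\widetilde A\bigl(a(u_1(x))u_1(x)\bigr)\le (m_A-1)\,A(u_1(x)).
\]
Since $u_1\in W_0^{1,\Phi}(\Omega)\hookrightarrow L^A(\Omega)$, the right-hand side is integrable, so $a(u_1)u_1\in L^{\widetilde A}(\Omega)$. The H\"older inequality in the Orlicz pair $(A,\widetilde A)$ together with the embeddings $W_0^{1,\Phi}(\Omega)\hookrightarrow L^A(\Omega)\cap L^1(\Omega)$ then yield a bound of the form
\[
\lambda\int_\Omega|G(x,u)|\,dx \le \lambda K_1\|u\|,
\]
for a constant $K_1$ depending only on $u_1$.

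Combining with Lemma \ref{ed1}, which provides $\int_\Omega\Phi(|\nabla u|)\,dx\ge\|u\|^l$ for all $\|u\|$ sufficiently large, gives
\[
J(u)\ge \|u\|^l - \lambda K_1\|u\|,
\]
which tends to $+\infty$ as $\|u\|\to\infty$ since $l>1$. The main technical obstacle is the Orlicz integrability $a(u_1)u_1\in L^{\widetilde A}(\Omega)$; once this is settled the remainder parallels the coercivity argument for $I$ already carried out in Lemma \ref{I-coercive}.
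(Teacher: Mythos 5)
Your proof is correct, and it takes a route that differs in a meaningful way from the paper's. The paper splits $\int_\Omega G(x,u)\,dx$ over the sets $\{u\le u_1\}$ and $\{u>u_1\}$ and dominates both pieces by $C_A\bigl(A(u)+|u|\bigr)$, so that the embeddings $W_0^{1,\Phi}(\Omega)\hookrightarrow L^A(\Omega)\cap L^1(\Omega)$ yield $J(u)\ge \|u\|^l-\lambda c_1\|u\|^{m_A}-\lambda c_2\|u\|$ and coercivity then hinges on the full strength of $l>m_A>1$. You instead exploit the truncation to get the uniform bound $|g(x,t)|\le C\bigl(a(u_1)u_1+1\bigr)$ (valid since $A'(t)=a(t)t$ is nondecreasing by convexity of $A$), hence $|G(x,u)|\le C\bigl(a(u_1)u_1+1\bigr)|u|$, and you pay for this sharper linear-in-$u$ bound with the extra step $a(u_1)u_1\in L^{\widetilde A}(\Omega)$, which your Young-equality computation $\widetilde A(a(t)t)=a(t)t^2-A(t)\le (m_A-1)A(t)$ settles correctly (note that $\int_\Omega A(u_1)\,dx<\infty$ does require the $\Delta_2$-condition for $A$, which is guaranteed by $m_A<\infty$). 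The payoff is that your final estimate $J(u)\ge\|u\|^l-\lambda K_1\|u\|$ only needs $l>1$, so your argument is slightly more general and makes visible that the coercivity of $J$ (unlike that of $I$) does not really depend on $m_A<l$; the paper's argument is shorter because it recycles the bound $|F(x,t)|\le C_A(A(t)+|t|)$ already used for Lemma \ref{I-coercive} and avoids any discussion of the conjugate space $L^{\widetilde A}(\Omega)$.
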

\begin{proof}
By $(f_0)$
$$|F(x,t)|\leq C_A(A(t)+|t|),~t\in\mathbb{R}$$
and
$$f(x,u_1)(t-u_1)\leq 2C_A(A(t)+|t|),~\forall t\geq u_1. $$
Thus, for $\|u\|\geq 1$,

$\displaystyle\int_\Omega\Phi(|\nabla u|)dx-\lambda\int_\Omega G(x,u)dx=$
\begin{eqnarray}
&=  & \int_\Omega\Phi(|\nabla u|)dx-\lambda\int_{u\leq u_1}F(x,u)dx-\lambda\int_{u>u_1}F(x,u_1)+f(x,u_1)(u-u_1)dx\nonumber \\
&\ge& \int_\Omega\Phi(|\nabla u|)dx-3\lambda C_A\int_{\Omega}(A(u)+|u|)dx\nonumber\\
%
&\geq  & \|u\|^l-\lambda c_1\|u\|^{m_A}-\lambda c_2\|u\|,
\end{eqnarray}
 where $c_1,c_2$ are positive constants. As $1<m_A<l$, we get the desired result.
\end{proof}

\noindent{\bf Proof of Theorem \ref*{T1}: } Gathering Lemma \ref{MPG} with the fact that $J(u_1)=I(u_1)<0$ for $\lambda \geq \lambda^*$, we can apply the Mountain Pass Theorem found in \cite[Theorem 3.1]{AlvesdeMorais} to guarantee the existence of a $(PS)$ sequence $(u_n)\subset W_0^{1,\Phi}(\Omega)$ associated with the mountain pass level of $J$, that is, $J(u_n)\to c\geq \rho$ and $\tau_n\to 0$ in $\R$ such that
\begin{equation} \label{sequencia2}
Q(v)-Q(u_n) \geq \lambda\int_{\Omega}g(x,u_n)(v-u_n)\,dx- \tau_n\|v-u_n\|,
\end{equation}
for all $v \in W_0^{1,\Phi}(\Omega)$ holds true for  all $n \in\mathbb{N}$,
where
$$c:=\inf_{\gamma\in\Gamma}\max_{t\in[0,1]}J(\gamma(t))$$
and
$$\Gamma=\{\gamma\in C[0,1]|~\gamma(0)=0,~\gamma(1)=u_1\}.$$
By Lemma \ref{jc}, $(u_n)$ is bounded. Hence, we can assume without loss of generality that $u_n\stackrel{*}\rightharpoonup u_2$ in $W_0^{1,\Phi}(\Omega)$.

In the sequel, we will show that $u_2 \in \text{dom}(\phi(t)t)$. By Lemma \ref{dominio}, there is $(v_n) \subset \text{dom}(\phi(t)t)$ such that
$$
|v_n|\leq |u_n|,\qquad\|v_n-u_n\| \leq 1/n \quad \mbox{and} \quad \int_{\Omega}\Phi(|\nabla v_n|)\,dx \leq \int_{\Omega}\Phi(|\nabla u_n|)\,dx, \quad \forall n \in \mathbb{N}.
$$
Consequently,
$$
Q(v)-Q(v_n) \geq \lambda\int_{\Omega}g(x,u_n)(v-u_n)\,dx- |\tau_n|\, \|v-u_n\|,
$$
for all $v \in W_0^{1,\Phi}(\Omega)$. Setting $v=v_n-\frac{1}{n}v_n$, we get
$$
Q(v_n-\frac{1}{n}v_n)-Q(v_n)
\geq \lambda\int_{\Omega}g(x,u_n)(v_n-\frac{1}{n}v_n-u_n)\,dx- |\tau_n| \, \|v_n-\frac{1}{n}v_n-u_n\|,
$$
that is,
$$
\begin{array}{l}
\displaystyle		\int_{\Omega}\frac{(\Phi(|\nabla v_n-\frac{1}{n}\nabla v_n|)-\Phi(|\nabla v_n|))}{-\frac{1}{n}}\,dx \leq  \\
\mbox{}\\
\displaystyle  -n\lambda\int_{\Omega}g(x,u_n)(v_n-u_n)\,dx+ \lambda\int_{\Omega}g(x,u_n)v_n \,dx \nonumber + n|\tau_n|\,\|v_n-u_n\|+|\tau_n|\,\|v_n\|. \nonumber
\end{array}
$$	
As $(u_n)$ is bounded in $W_0^{1,\Phi}(\Omega)$, $(g(x,u_n))$ is bounded in $L_{\widetilde{A}}(\Omega)$, $(\tau_n)$ is bounded in $\mathbb{R}$ and $\|v_n-u_n\| \leq \frac{1}{n}$, it follows that the right side of the above inequality is bounded. Therefore, there is $M>0$ such that
$$
\int_{\Omega}\frac{(\Phi(|\nabla v_n-\frac{1}{n}\nabla v_n|)-\Phi(|\nabla v_n|))}{-\frac{1}{n}}\, dx  \leq M, \quad \forall n \in \mathbb{N}.
$$
Using again that $\Phi$ is $C^{1}$, there exists $\theta_n(x) \in [0,1]$ such that
$$
\frac{\Phi(|\nabla v_n-\frac{1}{n}\nabla v_n|)-\Phi(|\nabla v_n|)}{-\frac{1}{n}}=\phi(|(1-{\theta_n(x)}/{n})\nabla v_n|)(1-{\theta_n(x)}/{n})|\nabla v_n|^{2}.
$$
Recalling that $0< 1-{\theta_n(x)}/{n}\leq 1$, we know that
$$
1-{\theta_n(x)}/{n} \geq (1-{\theta_n(x)}/{n})^{2},
$$
which leads to
$$
\int_{\Omega}\phi(|(1-{\theta_n(x)}/{n})\nabla v_n|)(1-{\theta_n(x)}/{n})^{2}|\nabla v_n|^{2}\,dx \leq M \quad \forall n \in \mathbb{N}.
$$
As $u_n \stackrel{*}{\rightharpoonup} u_2$ in $W_0^{1,\Phi}(\Omega)$, we also have $(1-{\theta_n(x)}/{n})v_n \stackrel{*}{\rightharpoonup} u_2$ in $W_0^{1,\Phi}(\Omega).$ Therefore,  using the fact that $\phi(t)t^{2}$ is convex, we can apply \cite[Lemma 3.2]{GKMS} to obtain
$$
\liminf_{n \to +\infty}\int_{\Omega}\phi(|(1-{\theta_n(x)}/{n})\nabla v_n|)(1-{\theta_n(x)}/{n})^{2}|\nabla v_n|^{2}\,dx  \geq \int_{\Omega}\phi(|\nabla u_2|)|\nabla u_2|^{2}\,dx,
$$
and so,
$$
\int_{\Omega}\phi(|\nabla u_2|)|\nabla u_2|^{2}\,dx \leq M.
$$
Recalling that
$$
\phi(t)t^{2}=\Phi(t)+\widetilde{\Phi}(\phi(t)t), \quad \forall t \in \mathbb{R},
$$
we have
$$
\phi(|\nabla u_2|)|\nabla u_2|^{2}=\Phi(|\nabla u_2|)+\widetilde{\Phi}(\phi(|\nabla u_2|)|\nabla u_2|),
$$
which leads to
$$
\int_{\Omega}\phi(|\nabla u_2|)|\nabla u_2|^{2}\,dx=\int_{\Omega}\Phi(|\nabla u_2|)\,dx+\int_{\Omega}\widetilde{\Phi}(\phi(|\nabla u_2|)|\nabla u_2|)\,dx.
$$
Since $\displaystyle \int_{\Omega}\phi(|\nabla u_2|)|\nabla u_2|^{2}\,dx$ and $\displaystyle \int_{\Omega}\Phi(|\nabla u_2|)\,dx$ are finite, we see that $\displaystyle \int_{\Omega}\widetilde{\Phi}(\phi(|\nabla u_2|)|\nabla u_2|)\,dx$ is also finite. \\
{\bf Claim:} $J(u_n)\to J(u_2)$ as $n \to +\infty$. \\

In fact, from \eqref{sequencia2} and $W_0^{1,\Phi}(\Omega)\stackrel{comp}\hookrightarrow L_A(\Omega)$,
$$\int_{\Omega}G(x,u_n) dx\to \int_{\Omega}G(x,u_2) dx\qquad\mbox{and}\qquad \int_{\Omega}g(x,u_n)(v-u_n) dx\to \int_{\Omega}g(x,u_2)(v-u_2) dx.$$
Since $(J(u_n))$ is bounded sequence, for some a subsequence of $(u_n)$, still denoted by itself, we can assume that
$$\lim_{n\to \infty}Q(u_n)= L.$$
Using the fact  that $Q$ is lower semicontinuous with respect to the weak* topology, we derive that
$$Q(u_2)\leq \liminf_{n\to\infty} Q(u_n)=L.$$
On the other hand, making $v=u_2$ in \eqref{sequencia2}, we conclude that
$$
Q(u_2)\geq  \liminf_{n\to\infty} Q(u_n)=L,
$$
and so,
$$\lim_{n\to \infty} Q(u_n)=Q(u_2).$$
Thus, $J(u_n)\to J(u_2)=c$. Now, we can use the same ideas of Lemma \ref{Domin} to prove that $u_2\in D_\Phi\cap dom(\phi(t)t)$. Letting $n \to +\infty$  in \eqref{sequencia2}, we conclude that $u_2$ is a critical point of $J$. Using similar idea explored in the proof of Lemma \ref{Lema0}, $u_2$ is a weak solution of $(PA)$.  From Lemma \ref{Auxiliar} we obtain that $u_2\leq u_1$, then $g(x,u_2(x))=f(x,u_2(x))$ for all $x \in \Omega$, from where it follows that $u_2$ is a weak solution of
$(P)$ with $J(u_2)=I(u_2)$. Moreover, by Lemma \ref{Lema0}, $u_2$ is a weak solution of
$(P)$. But, from Lemma \ref{negative}, $I(u_1)<0<c=I(u_2)$, from where it follows that $u_1\neq u_2$. This finishes the proof.

\section{Proof of Theorem \ref{T2}}
In this section we will use the same approach of the last section. In order to avoid some repetitions, we are going to show only the different accounts. For example, it is important to point out that if $(\phi_4)$ holds, then $D_\Phi=dom(t\phi(t))=W_0^{1,\Phi}(\Omega)$.
\begin{lemma}\label{I-coercive-1}
	Assume that $(\phi_1),~(\phi_2),~(\phi_4)$ and $(f_3)$ hold. Then the functional $I$ is coercive.
\end{lemma}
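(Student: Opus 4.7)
The plan is to bound the nonlinear term via $(f_3)$ and Hölder's inequality, and then to recognize that $I(u)$ is controlled from below by an expression of the form $t-\lambda Kt^\alpha$ with $t=\int_\Omega\Phi(|\nabla u|)\,dx$ and $\alpha<1$, which drives $I(u)\to+\infty$ as $\|u\|\to+\infty$.

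First, I would use $(f_3)$ together with Hölder's inequality on the conjugate exponents $1/\alpha$ and $1/(1-\alpha)$ to write
\begin{equation*}
\Big|\int_\Omega F(x,u)\,dx\Big|\leq C\int_\Omega \Phi(|u|)^\alpha\,dx\leq C|\Omega|^{1-\alpha}\Big(\int_\Omega\Phi(|u|)\,dx\Big)^\alpha.
\end{equation*}
The next step is to convert $\int_\Omega\Phi(|u|)\,dx$ into $\int_\Omega\Phi(|\nabla u|)\,dx$. For this I would first deduce from $(\phi_4)$ the classical relation $\ell\,\Phi(t)\leq t^2\phi(t)\leq m\,\Phi(t)$ by monotonicity of $\Phi'(t)/t^{\ell-1}$ and $\Phi'(t)/t^{m-1}$ (see \cite{FN}), which then yields the multiplicative estimate $\Phi(st)\leq\max\{s^\ell,s^m\}\Phi(t)$ for all $s,t\geq 0$. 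Combining this with the modular Poincaré inequality $\int_\Omega\Phi(|u|)\,dx\leq\int_\Omega\Phi(d|\nabla u|)\,dx$ recalled in Section~2, with $d=\mathrm{diam}(\Omega)$, gives
\begin{equation*}
\int_\Omega\Phi(|u|)\,dx\leq \max\{d^\ell,d^m\}\int_\Omega\Phi(|\nabla u|)\,dx.
\end{equation*}

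Putting everything together, for $t:=\int_\Omega\Phi(|\nabla u|)\,dx$ there exists $K=K(\alpha,\Omega,\ell,m)>0$ such that
\begin{equation*}
I(u)\geq t-\lambda K\,t^\alpha.
\end{equation*}
Since $0<\alpha<1$, the right-hand side tends to $+\infty$ as $t\to+\infty$. Finally, Lemma~\ref{ed1} guarantees $t\geq\|u\|^\ell$ for all $\|u\|$ large enough, so $t\to+\infty$ whenever $\|u\|\to+\infty$, and therefore $I(u)\to+\infty$ as $\|u\|\to+\infty$, proving coercivity.

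The only delicate point is making sure the $(\phi_4)$ bounds on $(t\phi(t))'/\phi(t)$ indeed transfer to the multiplicative control $\Phi(st)\leq\max\{s^\ell,s^m\}\Phi(t)$ needed in the modular Poincaré step; this is standard once one notes that $(\phi_4)$ implies $\ell\leq t\Phi'(t)/\Phi(t)\leq m$, which is the same structural condition exploited in \cite{FN}. Everything else is Hölder plus the coercivity bound already used in Lemma~\ref{I-coercive}.
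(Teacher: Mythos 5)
Your proof is correct and follows essentially the same route as the paper: bound $\left|\int_\Omega F(x,u)\,dx\right|$ by a constant times $\left(\int_\Omega\Phi(|\nabla u|)\,dx\right)^\alpha$ using $(f_3)$, H\"older's inequality and the (modular) Poincar\'e inequality, then conclude from $\alpha<1$ and Lemma~\ref{ed1}. The only difference is that you spell out the intermediate steps (the conjugate exponents $1/\alpha$, $1/(1-\alpha)$ and the estimate $\Phi(st)\leq\max\{s^\ell,s^m\}\Phi(t)$ coming from $(\phi_4)$) that the paper leaves implicit.
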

\begin{proof}
	Initially, from  $(f_3)$,
	\begin{eqnarray}\label{cons-f_3}
		|F(x,t)|\leq \frac{C}{\alpha}\Phi(t)^\alpha.
	\end{eqnarray}
Using \eqref{cons-f_3}, H\"{o}lder's and Poincar\'e's Inequalities, there exist positive constants $C_1$ and $C_2$ satisfying
	\begin{eqnarray}
	I(u)&\geq&\int_{\Omega}\Phi(|\nabla u|)dx - \lambda \frac{C}{\alpha}\int_{\Omega}[\Phi(u)]^\alpha dx\nonumber\\
        &\geq& \int_{\Omega}\Phi(|\nabla u|)dx - \lambda C_1 \left(\int_{\Omega}\Phi(|\nabla u|) dx\right)^{\alpha}\nonumber\\
		&\geq&\int_{\Omega}\Phi(|\nabla u|)dx\left[1 - \lambda C_2\left(\int_{\Omega}\Phi(|\nabla u|) dx\right)^{\alpha-1}\right].\nonumber
	\end{eqnarray}
As $\alpha \in (0,1)$, the Lemma \ref{ed1} ensures that $I$ is coercive functional. This finishes the proof.
\end{proof}

\begin{lemma}\label{negative-1}
	Assume that $(\phi_1),~(\phi_2),~(\phi_4),~(f_2)$ and $(f_3)$ hold. Then, there exist $\lambda_*>0$ such that $I$ is bounded from below in  $W^{1,\Phi}_0(\Omega)$ and  $\displaystyle \inf_{u \in W_0^{1,\Phi}(\Omega)} I(u)<0$ for all $\lambda>\lambda_*$.
\end{lemma}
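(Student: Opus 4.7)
The plan is to adapt the two-step scheme of Lemma \ref{negative} to the current hypotheses, using the growth condition $(f_3)$ in place of $(f_0)$. Coercivity has already been proved in Lemma \ref{I-coercive-1}, so the work splits into (a) establishing a uniform lower bound on $I$ and (b) producing a test function $u_0$ with $I(u_0) < 0$ when $\lambda$ is large.

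For (a), I would use the coercivity to pick $R > 0$ such that $I(u) \geq 1$ whenever $\|u\| \geq R$. On the bounded set $\{\|u\| \leq R\}$, the modular $\int_\Omega \Phi(|\nabla u|)\,dx$ is controlled by the standard norm-modular relation in $L^\Phi$, while $(f_3)$ together with the H\"older inequality on $\Omega$ with exponents $1/\alpha$ and $1/(1-\alpha)$, followed by the modular Poincar\'e inequality (exactly as in the display in the proof of Lemma \ref{I-coercive-1}), gives
$$
\lambda \int_\Omega |F(x,u)|\,dx \;\leq\; \frac{\lambda C}{\alpha}\int_\Omega \Phi(u)^{\alpha}\,dx \;\leq\; \lambda C_1 \Bigl(\int_\Omega \Phi(|\nabla u|)\,dx\Bigr)^{\alpha},
$$
which stays uniformly bounded on the ball. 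Combined with the obvious bound on $\int_\Omega \Phi(|\nabla u|)\,dx$ there, this yields $|I(u)| \leq K$ for $\|u\| \leq R$ and hence a global lower bound $I(u) \geq -M$ on $W_0^{1,\Phi}(\Omega)$.

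For (b), I would mimic the construction in Lemma \ref{negative}: fix $t_1 > 0$ from $(f_2)$ so that $F(\cdot,t_1) > 0$ a.e. in $\Omega$, take a compact set $\Omega_1 \subset \Omega$, and choose $u_0 \in W_0^{1,\Phi}(\Omega)$ (e.g., a smooth cutoff of $t_1$) with $u_0 \equiv t_1$ on $\Omega_1$ and $0 \leq u_0 \leq t_1$ on $\Omega \setminus \Omega_1$. Applying $(f_3)$ on $\Omega \setminus \Omega_1$ gives
$$
\int_\Omega F(x,u_0)\,dx \;\geq\; \int_{\Omega_1} F(x,t_1)\,dx \;-\; C\,\Phi(t_1)^{\alpha}\,|\Omega \setminus \Omega_1|,
$$
and shrinking $|\Omega \setminus \Omega_1|$ makes the right-hand side a fixed positive number $\eta$. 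Then $I(u_0) \leq Q(u_0) - \lambda \eta$, which is negative once $\lambda > \lambda_* := Q(u_0)/\eta$.

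The only real subtlety, and essentially the only deviation from Lemma \ref{negative}, is step (a): because $(f_3)$ bounds $|F|$ by $\Phi(t)^{\alpha}$ rather than by a polynomial in $A(t)$ and $|t|$, one has to pass through H\"older with fractional exponents and then invoke the modular Poincar\'e inequality to close the estimate on $\{\|u\| \leq R\}$. Everything else is a direct transcription of the earlier proof.
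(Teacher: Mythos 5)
Your proposal is correct and follows essentially the same route as the paper: the authors likewise take the lower bound from the coercivity argument of Lemma \ref{negative} verbatim and only replace the inequality \eqref{Negative} by the estimate $\int_{\Omega}F(x,u_0)\,dx \geq \int_{\Omega_1}F(x,t_1)\,dx - C|\Omega\setminus\Omega_1|\,\Phi(t_1)^{\alpha}$ coming from \eqref{cons-f_3}, exactly as in your step (b). Your extra detail in step (a) (H\"older with exponents $1/\alpha$, $1/(1-\alpha)$ plus the modular Poincar\'e inequality, as in Lemma \ref{I-coercive-1}) is a faithful expansion of what the paper leaves implicit, and it is legitimate here because $(\phi_4)$ forces $\Phi$ to satisfy the $\Delta_2$-condition, so the modular is controlled on bounded sets.
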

\begin{proof}
Since the boundedness of $I$ from below follows as in Lemma \ref{negative}, we will omit this part. The rest of the proof follows as the same ideas of Lemma \ref{negative}. But, we need to change the inequality \eqref{Negative} by
	\begin{eqnarray}
		\int_{\Omega}F(x,u_0)dx
		&\geq & \int_{\Omega_1}F(x,t_1)dx-C\int_{\Omega\setminus\Omega_1}\Phi(u_0)^\alpha dx\nonumber\\
		&\geq & \int_{\Omega_1}F(x,t_1)dx-C|\Omega\setminus\Omega_1|\Phi(t_1)^\alpha >0.
	\end{eqnarray}
	Here, we point out that in the first inequality we have used \eqref{cons-f_3}.
\end{proof}

\begin{corollary}\label{sol12}
	Suppose that $(\phi_{1}),~(\phi_{2}),~(\phi_{4})$, $(f_2)$ and $(f_3)$ hold. Then, there exists a solution $u_1$ of problem $(P)$ such that $I(u_1)<0$.
\end{corollary}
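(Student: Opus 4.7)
The plan is to mirror the direct-minimization argument used to obtain $u_1$ in Corollary \ref{sol1}, while exploiting the simplification noted in the text that under $(\phi_4)$ one has $D_\Phi = \text{dom}(t\phi(t)) = W_0^{1,\Phi}(\Omega)$, so the delicate ``membership in $D_\Phi \cap \text{dom}(\phi(t)t)$'' step of the previous section becomes automatic.

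First, by Lemma \ref{I-coercive-1} and Lemma \ref{negative-1}, the functional $I$ is coercive, bounded from below, and $I_\infty := \inf_{u \in W_0^{1,\Phi}(\Omega)} I(u) < 0$ whenever $\lambda > \lambda_*$. I would then take a minimizing sequence $(u_n) \subset W_0^{1,\Phi}(\Omega)$ with $I(u_n) \to I_\infty$. Coercivity, combined with Lemma \ref{ed1}, forces $(u_n)$ to be bounded in $W_0^{1,\Phi}(\Omega)$; passing to a subsequence, Lemma \ref{Estrela} provides $u_1 \in W_0^{1,\Phi}(\Omega)$ with $u_n \stackrel{*}{\rightharpoonup} u_1$.

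Next, I would establish $I(u_1) = I_\infty$ by weak$^*$ lower semicontinuity. For the modular term $Q$ this is the convex, weak$^*$-l.s.c.\ argument of \cite[Lemma 3.2]{GKMS} already invoked in the previous section. For the nonlinear term $\mathcal{F}$, the compact embedding $W_0^{1,\Phi}(\Omega) \hookrightarrow L^A(\Omega)$ together with the growth control $|F(x,t)| \leq C\Phi(t)^\alpha$ coming from $(f_3)$ (and a Vitali/dominated-convergence argument) yields $\mathcal{F}(u_n) \to \mathcal{F}(u_1)$. Hence $I(u_1) \leq \liminf_n I(u_n) = I_\infty < 0$, so in particular $u_1 \neq 0$, and $u_1$ is a Szulkin-critical point of $I$, that is, it satisfies the subdifferential inequality \eqref{E1}.

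Finally, to upgrade \eqref{E1} to a genuine weak solution of $(P)$, I would invoke Lemma \ref{Lema0}, whose hypotheses only require $(\phi_1),(\phi_2)$ and $u_1 \in D_\Phi \cap \text{dom}(\phi(t)t)$. Because $(\phi_4)$ ensures this inclusion holds for \emph{every} element of $W_0^{1,\Phi}(\Omega)$, Lemma \ref{Lema0} applies at once and $u_1$ is a weak solution of $(P)$ with $I(u_1) < 0$. The main obstacle I expect is the weak$^*$ continuity of $\mathcal{F}$ in the borderline case $\ell = 1$, where $\widetilde{\Phi}$ fails the $\Delta_2$-condition and the standard duality tools are blunted; however $(f_3)$ is tailored precisely so that $|F(x,\cdot)|$ is dominated by a sub-linear power of $\Phi$, which provides enough equi-integrability to pass to the limit using Lemma \ref{Estrela} and the compact embedding into $L^A(\Omega)$.
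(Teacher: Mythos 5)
Your proposal is correct and follows essentially the same route as the paper: both obtain $u_1$ as a global minimizer using coercivity (Lemma \ref{I-coercive-1}), the negative infimum (Lemma \ref{negative-1}) and weak$^*$ lower semicontinuity of $I$, then pass to a weak solution via Lemma \ref{Lema0} after observing that $(\phi_4)$ forces $D_\Phi=\text{dom}(\phi(t)t)=W_0^{1,\Phi}(\Omega)$. The paper is merely terser, citing Szulkin's Proposition 1.1 directly for the fact that the minimizer is a critical point, where you spell out the minimizing-sequence and lower-semicontinuity details.
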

\begin{proof}
	Since $I$ is lower semicontinuous in weak$^*$ topology, we can use Lemmas \ref{I-coercive-1} and \ref{negative-1} to obtain $u_1\in W_0^{1,\Phi}(\Omega)$ such  that
	$$I(u_1):=\inf_{u\in W_0^{1,\Phi}(\Omega)}I(u)<0.$$
	Since $I$ is G\^ateaux differentiable, it follows from \cite[Prop. 1.1]{Szulkin} that $u_1$ is a critical point of I. Moreover, using the fact that $\Phi$ satisfies the $\Delta_2$-condition, we obtain that $D_\Phi=dom(\phi(t)t)=W_0^{1,\Phi}(\Omega)$. From Lemma \ref{Lema0}, we conclude that $u_1$ is a solution of problem $(P)$.
\end{proof}

Now, we shall consider the functional $J$ defined by \eqref{J}.

\begin{lemma}\label{MPG2}
	Assume $(\phi_1),~(\phi_2),~(\phi_4),~(f_2)$, $(f_3)$ and $m<l^*$. Then there exist $r,\rho>0$ such that $I(u)\geq \rho$ for all $u\in W_0^{1,\Phi}(\Omega)$ with $\|u\|=r$.
\end{lemma}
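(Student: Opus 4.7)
The plan is to mimic the coercivity estimate used in Lemma \ref{I-coercive-1} but to push it further so as to extract a strictly positive lower bound on some sphere. First I would restrict to $u$ with $\|u\|\geq 1$, so that Lemma \ref{ed1} supplies the crucial modular lower bound
$$
T := \int_\Omega \Phi(|\nabla u|)\,dx \;\geq\; \|u\|^{\ell}.
$$
For the nonlinear term, condition $(f_3)$ gives $|F(x,u)|\leq C\,\Phi(u)^\alpha$; since $s\mapsto s^\alpha$ is concave on $[0,\infty)$ for $\alpha\in(0,1)$, Jensen's (equivalently H\"older's) inequality yields
$$
\int_\Omega \Phi(u)^\alpha\,dx \;\leq\; |\Omega|^{1-\alpha}\left(\int_\Omega \Phi(u)\,dx\right)^{\alpha}.
$$
Combining the modular Poincar\'e inequality recalled in Section 2 with the scaling $\Phi(d\,t)\leq\max\{d^{\ell},d^{m}\}\Phi(t)$ induced by $(\phi_4)$ then produces a constant $C_1>0$ such that $\int_\Omega \Phi(u)\,dx \leq C_1\,T$.

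Collecting these bounds gives, for some $C_2>0$ independent of $u$,
$$
I(u) \;\geq\; T - \lambda C_2\,T^{\alpha} \;=\; T\bigl(1-\lambda C_2\,T^{\alpha-1}\bigr).
$$
Since $\alpha<1$, the bracket tends to $1$ as $T\to+\infty$. Setting $T_0:=(2\lambda C_2)^{1/(1-\alpha)}$, I would choose $r:=\max\{1,T_0^{1/\ell}\}$, so that on the sphere $\|u\|=r$ one has $T\geq r^{\ell}\geq T_0$, which forces $\lambda C_2\,T^{\alpha-1}\leq 1/2$ and hence
$$
I(u)\;\geq\; \frac{T}{2}\;\geq\;\frac{r^{\ell}}{2}\;=:\;\rho\;>\;0.
$$

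The main obstacle is essentially bookkeeping rather than analytic: one must verify that the regime $\|u\|\geq 1$ is preserved throughout (below this threshold Lemma \ref{ed1} fails and the modular lower bound flips into an upper bound of the form $\|u\|^{m}$, wrecking the argument), check that each constant depends only on $\Phi$, $|\Omega|$, and $\alpha$, and splice the modular Poincar\'e inequality with the $(\phi_4)$-scaling cleanly. The extra hypothesis $m<\ell^{*}$ does not intervene in this particular step; I expect its role to be restricted to the subsequent Palais--Smale analysis, where Sobolev-type embeddings into $L^{q}$ with $q$ slightly below $\ell^{*}$ are needed.
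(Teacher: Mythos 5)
Your argument establishes a true statement, but not the one this lemma is needed for, and the gap is structural rather than technical. What you prove is positivity on a \emph{large} sphere: your radius is $r=\max\{1,(2\lambda C_2)^{1/(\ell(1-\alpha))}\}$, and your estimate in fact gives $I>0$ on the whole exterior region $\|u\|\geq r$. Since $u_1$ is a global minimizer with $I(u_1)<0$, this forces $\|u_1\|<r$, so the sphere $\|u\|=r$ does \emph{not} separate $0$ from $u_1$, and the mountain pass geometry (which requires every path $\gamma$ with $\gamma(0)=0$, $\gamma(1)=u_1$ to cross the sphere, i.e.\ $r<\|u_1\|$) is not obtained. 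In effect you have only re-derived the coercivity of Lemma \ref{I-coercive-1}; coercivity always yields positivity on large spheres and carries no mountain pass information.

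The paper's proof works in the opposite regime: it takes $\|u\|\leq 1$, argues for the truncated functional $J$ of \eqref{J} rather than for $I$, and exploits $(f_1)$ (the monotonicity of $F$ near $0$, which gives $G(x,u)\leq 0$ wherever $u\leq\delta$) so that only the set $\Omega_{u,\delta}$ where $u$ exceeds $\min\{\delta,u_1\}$ contributes to the nonlinear term. On that set the contribution is controlled by $\overline{C}_\delta\|u\|^{s}$ for an $s\in(m,l^{*})$ --- this is exactly where the hypothesis $m<l^{*}$ enters, contrary to your closing remark that it plays no role in this step --- while the modular satisfies $\int_\Omega\Phi(|\nabla u|)\,dx\geq\|u\|^{m}$ for $\|u\|\leq 1$. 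Hence $J(u)\geq\|u\|^{m}\bigl(1-\lambda\overline{C}_\delta\|u\|^{s-m}\bigr)$, which is positive on every sufficiently small sphere, and $r$ can then be chosen smaller than $\|u_1\|$. To repair your proof you would have to abandon the large-$T$ asymptotics entirely and produce a small-radius estimate of this type; the bound $|F(x,t)|\leq C\Phi(t)^{\alpha}$ alone cannot do it, because near $t=0$ the exponent $\alpha<1$ makes $\Phi(t)^{\alpha}$ \emph{larger} than $\Phi(t)$, which is precisely why $(f_1)$ and the restriction to $\Omega_{u,\delta}$ are indispensable.
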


\begin{proof}
	We shall use the same ideas of Lemma \ref{MPG}, but we need to do some adjusts. Assume that $\|u\|\leq 1$. We will change the inequality \eqref{des}. For this end, by $(f_3)$, \eqref{cons-f_3} and choosing $s\in(m, l^*)$,
	\begin{eqnarray}\label{des1}
	\lambda\int_{\Omega_{u,\delta}}G(x,u)dx &\leq& \lambda C\int_{\Omega_{u,\delta}^-}\Phi(u)^\alpha\,dx+\lambda C\int_{\Omega_{u,\delta}^+}(F(x,u_1)+f(x,u_1)(u-u_1))\,dx\nonumber\\
	&\leq&\lambda \overline{C}\int_{\Omega_{u,\delta}}\Phi(u)^\alpha\,dx \nonumber\\
	&\leq&\lambda C_\delta\int_{\Omega_{u,\delta}}\max\left\{\left(\frac{u}{\delta}\right)^\alpha,\left(\frac{u}{\delta}\right)^{m\alpha}\right\} \,dx\nonumber\\
	&\leq&\lambda \frac{C_\delta}{\delta^s} \int_{\Omega_{u,\delta}} |u|^s\,dx \leq \overline{C}_\delta\|u\|^s.
	\end{eqnarray}
	Thus, from \eqref{des1},
	$$J(u)\geq \|u\|^m-\lambda\int_{\Omega_{u,\delta}}G(x,u)dx\geq \|u\|^m(1-\lambda\overline{C}_\delta \|u\|^{s-m}).$$
	
\end{proof}

The Lemma \ref{MPG2} combined with the equality $J(u_1) =I(u_1)<0$ permit to use again the Mountain Pass Theorem to obtain
a sequence $(u_n)\subset W_0^{1,\Phi}(\Omega)$ such that $J(u_n)\to c\geq \rho>0$ and \eqref{sequencia2} holds.

\begin{lemma}\label{Bounded}
	Assume $(\phi_1),(\phi_2),(\phi_4)$ and $(f_{1})- (f_{3})$. Then $(u_n)$ is bounded.
\end{lemma}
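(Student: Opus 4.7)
The plan is to derive a uniform bound on $\|u_n\|$ by combining the boundedness of $J(u_n)$ with pointwise growth control on $G(x, u_n)$ coming from $(f_3)$ and the truncation structure of $g$. Since $J(u_n) \to c$ yields
\[
Q(u_n) \le C + \lambda \int_\Omega G(x, u_n)\,dx,
\]
the task reduces to estimating the right-hand side sublinearly in $Q(u_n)$.

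First, I would split the integration domain according to the truncation. The integrand vanishes on $\{u_n < 0\}$ since $g \equiv 0$ there. On $\{0 \le u_n \le u_1\}$, $G(x, u_n) = F(x, u_n)$, so $(f_3)$ gives $|G(x, u_n)| \le C\,\Phi(u_n)^\alpha$. On $\{u_n > u_1\}$, $G(x, u_n) = F(x, u_1) + f(x, u_1)(u_n - u_1)$, and the boundedness of $u_1$ (inherited from the first-solution construction) renders both $F(x, u_1)$ and $f(x, u_1)$ uniformly bounded. I would then apply H\"older's inequality with exponents $1/\alpha$ and $1/(1-\alpha)$ together with the modular Poincar\'e inequality, noting that $(\phi_4)$ forces $\Phi$ to satisfy the $\Delta_2$-condition (since $m < \infty$), so $\int_\Omega \Phi(u_n)\,dx \le C\,Q(u_n)$. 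This yields
\[
\int_\Omega \Phi(u_n)^\alpha\,dx \le |\Omega|^{1-\alpha}\Bigl(\int_\Omega \Phi(u_n)\,dx\Bigr)^\alpha \le C_1\,Q(u_n)^\alpha,
\]
while the linear contribution from $\{u_n > u_1\}$ is absorbed via the embedding $W_0^{1,\Phi}(\Omega) \hookrightarrow L^1(\Omega)$. Assembling the pieces leads to
\[
Q(u_n) \le C_0 + \lambda C_1 Q(u_n)^\alpha + \lambda C_2 \|u_n\|.
\]

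If one assumes $\|u_n\| \to \infty$ toward a contradiction, Lemma \ref{ed1} gives $Q(u_n) \ge \|u_n\|^\ell$; since $\alpha < 1$, the sublinear term $Q(u_n)^\alpha$ is dominated by the left-hand side, reducing the inequality to $\|u_n\|^\ell \le C + \lambda C_2\|u_n\|$, which is impossible when $\ell > 1$. The main obstacle is the borderline case $\ell = 1$ allowed by $(\phi_4)$ (model $\Phi(t) = |t|\log(1+|t|)$), in which $\|u_n\|^\ell = \|u_n\|$ is of the same order as the linear term, so naive absorption fails. To close this case I would use the sharper Sobolev-type embedding $W_0^{1,\Phi}(\Omega) \hookrightarrow L^s(\Omega)$ for some $s \in (1, \ell^*)$ available under the assumption $m < \ell^*$ invoked in Lemma \ref{MPG2}, upgrading the $L^1$-bound on the truncation term to an $L^s$-bound that is strictly stronger than the modular growth rate and thus still produces a contradiction. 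Alternatively, testing \eqref{sequencia2} with $v = 0$ gives $Q(u_n) \le \lambda \int_\Omega g(x, u_n) u_n\,dx + o(\|u_n\|)$, and the right-hand side can be controlled by the same truncation-based analysis to provide the needed refinement.
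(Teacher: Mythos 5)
Your overall route is the same as the paper's: the paper disposes of this lemma in one line by asserting that $J$ is coercive ``using similar ideas of Lemmas \ref{I-coercive-1} and \ref{jc}'', and your decomposition of $\int_\Omega G(x,u_n)\,dx$ over $\{u_n<0\}$, $\{0\le u_n\le u_1\}$ and $\{u_n>u_1\}$, together with the H\"older/modular-Poincar\'e estimate $\int_\Omega\Phi(u_n)^\alpha\,dx\le C_1 Q(u_n)^\alpha$, is exactly the computation those two lemmas combine. For $\ell>1$ your contradiction argument closes correctly. The problem is the case $\ell=1$, which you rightly flag as the obstacle but do not actually resolve — and this is not a borderline curiosity: $(\phi_4)$ explicitly permits $\ell=1$ (model $\Phi(t)=|t|\log(1+|t|)$), and that is precisely the nonreflexive situation Section 4 is written for. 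Neither of your two proposed fixes works. Passing from an $L^1$- to an $L^s$-bound on the truncation term still gives $\int_{\{u_n>u_1\}}u_n\,dx\le |\Omega|^{1-1/s}\|u_n\|_{L^s}\le C\|u_n\|$, i.e.\ a term that is \emph{linear} in $\|u_n\|$, hence of exactly the same order as the lower bound $Q(u_n)\ge\|u_n\|^{\ell}=\|u_n\|$ from Lemma \ref{ed1}; for $\lambda$ large nothing prevents the negative term from swamping it. (Moreover $m<\ell^*$ is a hypothesis of Lemma \ref{MPG2}, not of this lemma, so you should not invoke it here.) Testing \eqref{sequencia2} with $v=0$ merely reproduces the same linear term $\lambda\int_{\{u_n>u_1\}}f(x,u_1)u_n\,dx$ and therefore the same impasse.

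The missing ingredient is the superlinearity of the N-function, not a finer Sobolev embedding. Since $\Phi(t)/t\to+\infty$, for every $\epsilon>0$ there is $C_\epsilon>0$ with $|t|\le\epsilon\Phi(t)+C_\epsilon$ for all $t\ge0$; combining this with the modular Poincar\'e inequality and the $\Delta_2$-condition for $\Phi$ (guaranteed by $m<\infty$ in $(\phi_4)$) yields
\begin{equation*}
\int_\Omega|u_n|\,dx\;\le\;\epsilon\int_\Omega\Phi(|u_n|)\,dx+C_\epsilon|\Omega|\;\le\;\epsilon\,C_d\,Q(u_n)+C_\epsilon|\Omega|.
\end{equation*}
Choosing $\epsilon$ with $\lambda C_2\epsilon C_d\le 1/2$ absorbs the offending linear term into the leading one, so that
\begin{equation*}
J(u_n)\;\ge\;\tfrac12\,Q(u_n)-\lambda C_1\,Q(u_n)^{\alpha}-C',
\end{equation*}
which tends to $+\infty$ whenever $Q(u_n)\to+\infty$ because $\alpha<1$. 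Since Lemma \ref{ed1} gives $Q(u_n)\ge\|u_n\|^{\ell}\to+\infty$ whenever $\|u_n\|\to+\infty$ (this only requires $\ell\ge1$), $J$ is coercive for every $\ell\ge1$ and the boundedness of $(J(u_n))$ forces $(u_n)$ to be bounded. With this substitution your argument is complete and coincides with the paper's intended proof.
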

\begin{proof}
	Using similar idea of Lemmas \ref{I-coercive-1} and \ref{jc}, we can prove that $J$ is coercive, and so, $(u_n)$ must be bounded.
\end{proof}

	\noindent {\bf Proof of Theorem \ref{T2}: } Initially, by Lemma \ref{Bounded}, $u_n\stackrel{*}\rightharpoonup u_2$, for some $u_2\in W_0^{1,\Phi}(\Omega)$. Moreover, $u_n\to u_2$ a.e. in $\Omega$ and there exists $h\in L^\Phi(\Omega)$ such that $|u_n|\leq h$.\\	
	{\bf Claim:} $J(u_n)\to J(u_2)$ as $n \to +\infty$.
	
	Firstly, we will prove that
	\begin{eqnarray}\label{conv}
		\int_{\Omega}G(x,u_n) dx\to \int_{\Omega}G(x,u_2) dx.
	\end{eqnarray} 	
	In fact, by \eqref{cons-f_3},	
	$$|G(x,t)|=|F(x,t)|\leq C\Phi^\alpha(t),~0<t<u_1.$$
	Now, using \cite[Theorem 1.6]{SCA}, we have $u_1\in L^{\infty}(\Omega)$. From this,
$$|G(x,t)| \leq |F(x,u_1)|+ |f(x,u_1)||t-u_1|\leq C_1+C_2t, \quad u_1<t.$$
	Consequently,
$$|G(x,u_n)|\leq C_1 + C\Phi^\alpha(h)+C_2h \in L^{1}(\Omega).$$
Now, (\ref{conv}) follows from Lebesgue dominated convergence theorem. A similar argument works to prove that
	$$\int_{\Omega}g(x,u_n)(v-u_n) dx\to \int_{\Omega}g(x,u_2)(v-u_2) dx,~v\in W_0^{1,\Phi}(\Omega)$$
As in the proof of Theorem \ref{T1},
	$$\lim_{n\to \infty} Q(u_n)=Q(u_2).$$
	Thus, $J(u_n)\to J(u_2)=c$. Letting $n \to +\infty$ in \eqref{sequencia2}, we conclude that $u_2$ is a critical point of $J$. Using the same ideas explored in the proof of Lemma \ref{Lema0}, it is easy to see that $u_2$ is a weak solution of $(PA)$.  Arguing as in the proof of Theorem \ref{T1}, $u_2$ is a weak solution of
	$(P)$ with $J(u_2)=I(u_2)$. But, from Lemma \ref{negative}, $I(u_1)<0<c=I(u_2)$, this implies that $u_1\neq u_2$. Moreover, From \cite[Theorem 1.6]{SCA},  $u_2\in L^{\infty}(\Omega)$, which finishes the proof.

\end{document}